\title[An embedding theorem for subshifts over groups with comparison]{An embedding theorem for subshifts over amenable groups with the comparison property}
\author{Robert Bland}
\newtheorem{theorem}{Theorem}[section]
\newtheorem{lemma}[theorem]{Lemma}
\newtheorem*{main-result}{Theorem~\ref{thm:main-result}}
\theoremstyle{definition}
\newtheorem{definition}[theorem]{Definition}
\newcommand{\A}{\mathcal A}
\newcommand{\F}{\mathcal F}
\renewcommand{\P}{\mathcal P}
\newcommand{\T}{\mathcal T}
\renewcommand{\S}{\mathcal S}
\DeclareMathOperator{\opint}{int}
\renewcommand{\int}{\opint}
\DeclareMathOperator{\ret}{ret}
\DeclareMathOperator{\ex}{ex}
\begin{document}

\maketitle

\begin{abstract}
    We obtain the following embedding theorem for symbolic dynamical systems. Let $G$ be a countable amenable group with the comparison property. Let $X$ be a strongly aperiodic subshift over $G$. Let $Y$ be a strongly irreducible shift of finite type over $G$ which has no global period, meaning that the shift action is faithful on $Y$. 
    If the topological entropy of $X$ is strictly less than that of $Y$, and $Y$ contains at least one factor of $X$, then $X$ embeds into $Y$. This result partially extends the classical result of Krieger when $G = \mathbb{Z}$ and the results of Lightwood when $G = \mathbb{Z}^d$ for $d \geq 2$. The proof relies on recent developments in the theory of tilings and quasi-tilings of amenable groups.
\end{abstract}

\section{Introduction}


Our central question is as follows. Given subshifts $X$ and $Y$ over a countable amenable group $G$, under what conditions does $X$ \textit{embed} into $Y$? That is, under what conditions is $X$ isomorphic to a subsystem of $Y$? One necessary condition is that $h(X) \leq h(Y)$, where $h(X)$ is the \textit{topological entropy} of the subshift $X$ (Definition~\ref{def:entropy}). When $G = \mathbb{Z}$, the classical embedding theorem of Krieger \cite[Theorem~3]{krieger} provides a complete answer in the case that $Y$ is a mixing shift of finite type (SFT) and $h(X) < h(Y)$. 
Namely, a certain necessary condition about periodic points turns out to be sufficient for an embedding $\psi : X \to Y$ to exist. 
In particular, the condition is automatically satisfied if $X$ is \textit{strongly aperiodic} (Definition~\ref{def:aperiodic}), meaning that no point of $X$ exhibits a non-identity element of $G$ as a period (in other words, the shift action is \textit{free} on $X$).
Krieger's embedding theorem has become a cornerstone of the structure theory of SFTs over $\mathbb{Z}$.

Much less is known about the embedding problem for groups other than $G = \mathbb{Z}$. In the case where $G = \mathbb{Z}^d$ for $d\geq 2$, one partial result is given by Lightwood: suppose $X$ is a strongly aperiodic subshift and suppose $Y$ is an SFT which satisfies a mixing condition (``square mixing", known elsewhere as the ``uniform filling" property) and contains a point with a finite orbit. If $h(X) < h(Y)$ and $Y$ contains at least one factor of $X$, then $X$ embeds into $Y$ \cite[Theorem~2.5]{lightwood}. 
In the case where $G = \mathbb{Z}^2$, Lightwood proved that a square mixing SFT automatically contains a point with a finite orbit \cite[Lemma~9.2]{lightwood}. Later, Lightwood proved that if $Y$ is an SFT over $\mathbb{Z}^2$  satisfying a slightly stronger mixing condition (``square-filling mixing", which implies square mixing), then $Y$ automatically contains at least one factor of $X$ \cite[Theorem~2.8]{lightwood-II}. These results together provide a partial extension of Krieger's embedding theorem to $G = \mathbb{Z}^2$. 

In this paper, we obtain an embedding theorem for subshifts over countable amenable groups with the \textit{comparison property}. We do not define the comparison property here, but we appeal to it in the form of Theorem~\ref{thm:comparison-injection}, a characterization of the comparison property due to Downarowicz and Zhang \cite{downarowicz-zhang-symb-ext, downarowicz-zhang-comparison}. We also note that the class of amenable groups with the comparison property includes every countable group containing no finitely generated subgroup of exponential growth (proof given in both \cite[Theorem~6.33]{downarowicz-zhang-symb-ext} and \cite[Theorem~5.11]{downarowicz-zhang-comparison}). In particular, this includes every countable abelian group. It is unknown whether there exists a countable amenable group without the comparison property.

Our main result is as follows.

\begin{main-result}
    Let $G$ be a countable amenable group with the comparison property. Let $X$ be a nonempty strongly aperiodic subshift over $G$. Let $Y$ be a strongly irreducible SFT over $G$ with no global period. 
    If $h(X) < h(Y)$ and $Y$ contains at least one factor of $X$, then $X$ embeds into $Y$.
\end{main-result}

If one selects $G = \mathbb{Z}^d$ for $d\geq 2$ in the above statement, then one does not immediately recover the theorem of Lightwood. We assume here a slightly stronger mixing condition on $Y$ (strong irreducibility in place of square mixing). But, we do \textit{not} assume that $Y$ contains a point with a finite orbit; instead, we assume that $Y$ has no global period, 
a condition which is automatically true for strongly irreducible subshifts over $\mathbb{Z}^d$. 


The condition that $Y$ has no global period means that the shift action is faithful on $Y$. This condition is examined in detail in \textsection\ref{sect:finding-Y_0}. This condition is necessary for the theorem; in particular, if a nonempty strongly aperiodic subshift $X$ embeds into $Y$, then $Y$ exhibits an aperiodic point and therefore has no global period.




The condition that $X$ is strongly aperiodic allows us to derive systems of arbitrarily nice \textit{quasi-tilings} (Definition~\ref{def:quasi-tiling}) of the group $G$ as factors of $X$, by appealing to a theorem of Downarowicz and Huczek \cite[Lemma~3.4]{downarowicz-huczek}. The strong aperiodicity is necessary for this; indeed, if $X$ factors onto systems of quasi-tilings with arbitrarily large, disjoint tiles, then no point of $X$ can exhibit a non-trivial period.
The comparison property allows us to go one step further and derive from $X$ systems of arbitrarily nice \textit{exact tilings} of the group $G$. We accomplish this by adapting a construction of Downarowicz and Zhang (presented in both \cite[Theorem~6.3]{downarowicz-zhang-comparison} and \cite[Theorem~7.5]{downarowicz-zhang-symb-ext}).


The condition that $h(X) < h(Y)$ allows us to deduce that if a finite subset (namely, the shape of a tile in a given quasi-tiling) is large enough, then there are more patterns of that shape appearing in points of $Y$ than in $X$.
This implies that there is an injective map from tile patterns in $X$ to tile patterns in $Y$. The condition that $Y$ is a strongly irreducible SFT allows us to mix those tile patterns together into a single point of $Y$. The condition that $Y$ contains a factor of $X$ provides us with a homomorphism $\phi : X \to Y$ (not necessarily injective) which we use to code the boundaries of the tiles.

The condition that $Y$ has no global period allows \textit{marker} patterns to be constructed in $Y$ (Theorem~\ref{thm:the-marker-patterns}), which are used to encode the locations of the centers of the tiles of a given quasi-tiling within a controllably-sparse subset of the symbols of a point of $Y$. The marker patterns allow one to uniquely decode, from a given image point $y = \psi(x)$, which quasi-tiling $t = \T(x)$ was used to construct $y$ to begin with. Then, the tile pattern injections from earlier allow one to uniquely reconstruct the preimage $x$.

We note that the class of strongly aperiodic subshifts is well populated. Indeed, Rosenthal \cite[Theorem~2']{rosenthal} has shown that every free ergodic measure preserving action of a discrete amenable group $G$ is measurably isomorphic to a strictly ergodic subshift over $G$. From this we see the existence of a large supply of strongly aperiodic subshifts. This was also observed by Lightwood \cite{lightwood} for $G = \mathbb{Z}^2$.

We also mention here the similarity of hypotheses between the present work and contemporary work by Huczek and Kopacz \cite{huczek-kopacz} which considers the ``factor problem" (under what hypotheses on $X$ and $Y$ does there exist a surjective homomorphism from $X$ to $Y$?) for subshifts over discrete amenable groups, which is complementary to the embedding problem. In particular, \cite[Theorem~2.12]{huczek-kopacz} utilizes the comparison property of $G$ in a form similar to Theorem~\ref{thm:quasi-to-exact-tiling-factor}. Additionally, \cite[Theorem~5.1]{huczek-kopacz} assumes that the domain $X$ is strongly irreducible and that $X$ exhibits, for each finite subset $F\subset G$, a pattern which exhibits no element of $F$ as a period. This latter condition is (under assumption of strong irreducibility) equivalent to our ``separating elements" condition (Definition~\ref{def:separate-elements}), which is the form in which we appeal to the faithfulness of the shift action of $G$ on $Y$.

The paper is organized as follows. In \textsection 2, we review some preliminary material about countable amenable groups, symbolic dynamics, and quasi-tilings over countable amenable groups with and without the comparison property. In \textsection 3 we present our main results. We first construct a subsystem $Y_0 \subset Y$ which we pass to in the construction of our embedding (Theorem~\ref{thm:finding-the-good-target-Y0}). We then construct marker patterns for $Y$ (Theorem~\ref{thm:the-marker-patterns}). Finally, we present the construction of our embedding of $X$ into $Y$ (Theorem~\ref{thm:main-result}). In \textsection 4, we discuss various ways in which Theorem~\ref{thm:main-result} could potentially be strengthened and associated obstacles.


\section{Prelimaries}

\subsection{Amenable groups}

In this section, we briefly review the theory of countable amenable groups and state a few lemmas which shall be needed later. For a more thorough introduction to the theory of dynamics on amenable groups, see \cite{kerr-li}.

\begin{definition}[Invariance, amenability, and F\o lner sequences]
\label{def:folner-seq}
    Let $G$ be a countable group, let $K \subset G$ be a finite subset and let $\varepsilon > 0$. A finite subset $F \subset G$ is said to be \textit{$(K,\varepsilon)$-invariant} if \[
        |KF\triangle F| < \varepsilon |F|.
    \] The group $G$ is \textit{amenable} if, for any finite subset $K \subset G$ and $\varepsilon > 0$, there exists a finite subset $F \subset G$ which is $(K,\varepsilon)$-invariant. Equivalently, $G$ is amenable if there exists a sequence $(F_n)_n$ of finite subsets of $G$ such that for each fixed finite subset $K \subset G$, it holds that \[
        \lim_{n\to\infty} \frac{|KF_n \triangle F_n|}{|F_n|} = 0.
    \] In this case, we say that $(F_n)_n$ is a \textit{F\o lner sequence}.
\end{definition}

Throughout this paper, $G$ denotes a fixed countable amenable group with identity element $e$. It is classically known \cite[Theorem~5.2]{namioka} that $G$ exhbits a F\o lner sequence $(F_n)_n$ that is ascending ($F_n \subset F_{n+1}$ for each $n$), that satisfies $\bigcup_n F_n = G$, and such that each $F_n$ is \textit{symmetric} ($F_n^{-1} = F_n$). Throughout this paper, $(F_n)_n$ denotes a fixed F\o lner sequence with all of the above properties. The symmetry property implies that $(F_n)_n$ is both left F\o lner and right F\o lner, because for each finite subset $K \subset G$ we have that \[
    |KF_n\triangle F_n| = |(KF_n\triangle F_n)^{-1}| = |F_n^{-1}K^{-1}\triangle F_n^{-1}| = |F_nK^{-1}\triangle F_n|.
\]

In this setting, when a finite subset $F \subset G$ is described as ``large", it is implied that $F$ is $(K,\varepsilon)$-invariant for some finite subset $K \subset G$ and $\varepsilon > 0$, which may be clear from context or chosen arbitrarily beforehand.
This sort of terminology is common but vague; a precise formulation is as follows. Let $\phi(F)$ be a property of finite subsets $F\subset G$. We shall say $\phi(F)$ holds for all \textit{sufficiently large} $F$ if there is a $(K,\varepsilon)$ such that if $F$ is $(K,\varepsilon)$-invariant then $\phi(F)$ is true. We shall say $\phi(F)$ holds for \textit{arbitrarily large} subsets $F$ if for every $(K,\varepsilon)$ there is a set $F$ for which $F$ is $(K,\varepsilon)$-invariant and $\phi(F)$ is true. 

We shall need the following elementary lemma; we omit the proof for brevity.

\begin{lemma}
\label{lem:transferring-invariance-conditions}
    Let $K \subset G$ be a finite subset with $e \in K$. For any two finite subsets $F_0$, $F_1 \subset G$, it holds that \[
        |KF_1 \setminus F_1| \leq |KF_0 \setminus F_0| + |K||F_0 \triangle F_1|.
    \]
\end{lemma}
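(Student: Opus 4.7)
The plan is a direct set-theoretic bookkeeping argument: track where an element of $KF_1 \setminus F_1$ can ``come from'' when $F_0$ and $F_1$ differ by a small symmetric difference, and show that such an element either witnesses non-invariance of $F_0$ or lies in a region controlled by $F_0 \triangle F_1$.

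The first step is the containment $KF_1 \subset KF_0 \cup K(F_1 \setminus F_0)$, which is immediate from $F_1 \subset F_0 \cup (F_1 \setminus F_0)$. Consequently,
\[
    KF_1 \setminus F_1 \subset \bigl(KF_0 \setminus F_1\bigr) \cup \bigl(K(F_1 \setminus F_0) \setminus F_1\bigr).
\]
I would then further decompose the first piece by inserting $F_0$: any element of $KF_0 \setminus F_1$ either fails to be in $F_0$, in which case it lies in $KF_0 \setminus F_0$, or it lies in $F_0 \setminus F_1 \subset F_0 \triangle F_1$. The second piece satisfies the trivial size bound $|K(F_1 \setminus F_0) \setminus F_1| \leq |K|\cdot|F_1 \setminus F_0|$.

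Combining these bounds gives
\[
    |KF_1 \setminus F_1| \leq |KF_0 \setminus F_0| + |F_0 \setminus F_1| + |K|\cdot |F_1 \setminus F_0|.
\]
The final step is where the hypothesis $e \in K$ enters: it forces $|K| \geq 1$, so $|F_0 \setminus F_1| \leq |K|\cdot|F_0 \setminus F_1|$, and the two error terms collapse into $|K|\cdot|F_0 \triangle F_1|$, giving the claimed inequality. I do not anticipate a real obstacle here; the only subtlety is remembering to use $e \in K$ to absorb the stray $|F_0 \setminus F_1|$ term rather than producing a slightly weaker bound with an extra additive $|F_0 \triangle F_1|$.
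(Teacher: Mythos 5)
Your proof is correct: the containment $KF_1 \subset KF_0 \cup K(F_1 \setminus F_0)$, the case split of $KF_0 \setminus F_1$ over membership in $F_0$, and the use of $e \in K$ (i.e.\ $|K|\geq 1$) to absorb the stray $|F_0 \setminus F_1|$ term into $|K||F_0 \triangle F_1|$ all check out. The paper explicitly omits its proof of this lemma (``we omit the proof for brevity''), and your argument is precisely the standard elementary one that fills that gap.
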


Next, we review concepts relating to the geometry of finite subsets of $G$.

\begin{definition}[Boundary and interior]
\label{def:bdry-int}
    Let $F \subset G$ and $K \subset G$ be finite subsets. The \textit{$K$-boundary} of $F$ is the subset \[
        \partial_K F = \{f\in F : Kf \not\subset F\}
    \] and the \textit{$K$-interior} of $F$ is the subset \[
        \int_K F = \{f\in F : Kf \subset F\}.
    \] Observe $F = \partial_K F \sqcup \int_K F$.
\end{definition}

We shall need the following elementary lemmas; see \cite[Lemma~2.1, Lemma~2.2]{bland-mcgoff-pavlov}.

\begin{lemma}
\label{lem:intersect-int-implies-containment}
    Let $F$, $K \subset G$ be finite subsets and let $g\in G$ be arbitrary. If $Kg$ intersects $\int_{KK^{-1}}F$, then $Kg \subset F$.
\end{lemma}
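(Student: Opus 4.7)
The plan is to unpack the hypothesis by picking a witness $f$ in the intersection $Kg \cap \int_{KK^{-1}} F$, use it to write $g$ in a factored form that brings it within reach of the interior condition on $f$, and then verify the containment element by element.

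More concretely, I would first fix some $f \in Kg \cap \int_{KK^{-1}} F$ provided by the hypothesis. Since $f \in Kg$, there exists $k_0 \in K$ with $f = k_0 g$, which rearranges to $g = k_0^{-1} f$. This factorization is the crux of the argument: it replaces the unknown element $g$ with something pinned to our witness $f$. Next, for an arbitrary $k \in K$, I would compute $kg = k k_0^{-1} f$ and observe that $k k_0^{-1} \in K K^{-1}$. Thus $kg \in (KK^{-1}) f$, and since $f \in \int_{KK^{-1}} F$, by Definition~\ref{def:bdry-int} this set is contained in $F$. As $k$ was arbitrary, $Kg \subset F$.

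There is no real obstacle here; the only point requiring care is bookkeeping with left/right conventions. The interior in Definition~\ref{def:bdry-int} is defined by left translation ($\{f \in F : Kf \subset F\}$), which is compatible with the left-translate $Kg$ in the statement. The appearance of $KK^{-1}$ rather than $K$ in the hypothesis is forced precisely because the particular $k_0 \in K$ producing $f = k_0 g$ is not known a priori, so to handle every $k \in K$ uniformly one needs the shift by $k_0^{-1}$ absorbed into the interior condition — which is exactly what replacing $K$ by $KK^{-1}$ accomplishes.
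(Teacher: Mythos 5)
Your proof is correct, and since the paper omits the argument (citing \cite[Lemma~2.1]{bland-mcgoff-pavlov} instead), there is no in-paper proof to compare against; your argument is the standard one. Fixing a witness $f = k_0 g$ in the intersection, rewriting $kg = kk_0^{-1}f \in (KK^{-1})f \subset F$, and invoking Definition~\ref{def:bdry-int} is exactly how this lemma is proved, and your remark explaining why $KK^{-1}$ (rather than $K$) must appear in the hypothesis correctly identifies the one subtle point.
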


\begin{lemma}
\label{lem:bdry-size-bound-for-invariance}
    Let $F$, $K \subset G$ be finite subsets. It holds that \[
        |\partial_K F| \leq |K| |KF \triangle F|.
    \]
\end{lemma}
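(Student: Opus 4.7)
The plan is to exhibit, for each boundary element $f \in \partial_K F$, a ``witness'' in $KF \setminus F$, and then argue that the map from boundary elements to witnesses is at most $|K|$-to-1. Since $KF \setminus F \subset KF \triangle F$, this will give the desired bound.

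More concretely, I would proceed as follows. By the definition of $\partial_K F$, for each $f \in \partial_K F$ there exists at least one $k \in K$ with $kf \notin F$. For each such $f$, fix an arbitrary choice $k_f \in K$ with $k_f f \notin F$, and define $\psi(f) = k_f f$. Since $k_f \in K$ and $f \in F$, we have $k_f f \in KF$; combined with $k_f f \notin F$, this shows $\psi(f) \in KF \setminus F$. So $\psi$ is a well-defined map $\partial_K F \to KF \setminus F$.

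Next, I would bound the size of the fibers of $\psi$. Given $g \in KF \setminus F$, any $f \in \psi^{-1}(g)$ satisfies $k_f f = g$, hence $f = k_f^{-1} g \in K^{-1} g$. Since $|K^{-1} g| = |K|$, each fiber has cardinality at most $|K|$. Summing over the codomain yields
\[
    |\partial_K F| = \sum_{g \in KF \setminus F} |\psi^{-1}(g)| \leq |K| \cdot |KF \setminus F| \leq |K| \cdot |KF \triangle F|,
\]
which is the desired inequality.

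There is essentially no obstacle here: the statement is an elementary counting argument, and the only mild subtlety is choosing the right witness map and observing that $f$ is recoverable from $(k_f, k_f f)$, which bounds the fibers by $|K|$. The only question of taste is whether to phrase the argument via an injection $\partial_K F \hookrightarrow K \times (KF \setminus F)$, $f \mapsto (k_f, k_f f)$, or via the fiber-counting estimate above; both lead to the same bound in one or two lines.
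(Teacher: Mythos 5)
Your argument is correct: the witness map $f \mapsto k_f f$ lands in $KF \setminus F \subset KF \triangle F$, and the fiber bound $|\psi^{-1}(g)| \leq |K|$ follows since any $f$ in the fiber lies in $K^{-1}g$. The paper itself gives no proof here (it cites Lemma~2.2 of the referenced work of Bland, McGoff, and Pavlov), and your counting argument is exactly the standard one that proof uses, so there is nothing to add.
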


Next we review the notion of \textit{density} for infinite subsets of $G$. Roughly speaking, an infinite subset $C \subset G$ has density $\rho \in [0,1]$ if, whenever a finite subset $F$ is sufficiently large, it holds for all $g\in G$ that \[
    |F\cap Cg| \sim \rho|F|.
\] 
We use the F\o lner sequence to make this notion precise.


\begin{definition}[Banach density]
\label{def:banach-density}
    Given a subset $C \subset G$, the \textit{upper Banach density} of $C$ is \[
        \overline{D}(C) = \liminf_{n\to\infty} \sup_{g\in G}\frac{|F_n\cap Cg|}{|F_n|}
    \] and the \textit{lower Banach density} of $C$ is \[
        \underline{D}(C) = \limsup_{n\to\infty} \inf_{g\in G} \frac{|F_n \cap Cg|}{|F_n|}.
    \]
\end{definition}

These definitions have also appeared in the recent work on quasi-tilings of amenable groups due to Downarowicz, Huczek, and Zhang \cite{downarowicz-huczek, downarowicz-huczek-zhang, downarowicz-zhang-symb-ext, downarowicz-zhang-comparison}. 
The value of the upper (resp. lower) density does not depend on the choice of F\o lner sequence \cite[Lemma~2.9]{downarowicz-huczek-zhang}. Note that $\overline{D}(C) = 1- \underline{D}(G\setminus C)$ holds for any subset $C\subset G$.

Next we review a notion regarding how an infinite subset $C \subset G$ may be distributed throughout the group $G$.

\begin{definition}[Separation]
\label{def:separated}
    Given a finite subset $L \subset G$, we say an infinite subset $C \subset G$ is \textit{$L$-separated} if $Lc_1 \cap Lc_2 = \varnothing$ for every distinct pair $c_1 \neq c_2 \in C$. 
\end{definition}

Note that if $C$ is $L$-separated, then so is $Cg$ for each fixed $g\in G$. Using this fact, one may easily check that if $C$ is $L$-separated then $\overline{D}(C) \leq 1/|L|$. The following lemma states something slightly stronger.

\begin{lemma}
\label{lem:approx-dens-of-L-disjoint-subs}
    Let $M$, $L\subset G$ be finite subsets with $e \in M \subset L$. For any nonempty finite subset $F \subset G$ and any $L$-separated subset $C \subset G$, we have that \[
        \frac{|F\cap MC|}{|F|} \leq \frac{|M|}{|L|} + |M| \frac{|\partial_L F|}{|F|} + |M| \frac{|M^{-1}F\setminus F|}{|F|}.
    \]
\end{lemma}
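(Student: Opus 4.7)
The plan is to bound $|F \cap MC|$ by counting contributions $c$-by-$c$, using that the $L$-separation of $C$ (together with $M \subset L$) makes the decomposition $F \cap MC = \bigsqcup_{c \in C}(F \cap Mc)$ disjoint, and then splitting the relevant centers into those whose full $L$-translate sits inside $F$ and those whose doesn't.

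First I would observe that since $Mc_1 \cap Mc_2 \subset Lc_1 \cap Lc_2 = \varnothing$ for distinct $c_1, c_2 \in C$, we have
\[
    |F \cap MC| = \sum_{c \in C} |F \cap Mc| \leq |M| \cdot |C'|,
\]
where $C' = \{c \in C : Mc \cap F \neq \varnothing\} = C \cap M^{-1}F$ is the set of centers whose $M$-translate contributes.

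Next I would split $C' = C_a \sqcup C_b$, with
\[
    C_a = \{c \in C' : Lc \subset F\}, \qquad C_b = C' \setminus C_a.
\]
Since $C$ (hence $C_a$) is $L$-separated and each $Lc$ (for $c \in C_a$) sits inside $F$, the family $\{Lc\}_{c \in C_a}$ is a disjoint collection of $|L|$-sized subsets of $F$, so $|C_a| \leq |F|/|L|$. For $c \in C_b$, by definition $Lc \not\subset F$; using $e \in M \subset L$, if $c \in F$ then $c \in \partial_L F$, while if $c \notin F$ then since $c \in M^{-1}F$ (as $e \in M$ gives $c \in Mc$ only when paired with a witness, but in any case $c \in C' \subset M^{-1}F$), we have $c \in M^{-1}F \setminus F$. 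Therefore
\[
    C_b \subset \partial_L F \cup (M^{-1}F \setminus F), \qquad |C_b| \leq |\partial_L F| + |M^{-1}F \setminus F|.
\]

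Combining and dividing through by $|F|$ gives
\[
    \frac{|F \cap MC|}{|F|} \leq \frac{|M|\cdot|C_a|}{|F|} + \frac{|M|\cdot|C_b|}{|F|} \leq \frac{|M|}{|L|} + |M|\frac{|\partial_L F|}{|F|} + |M|\frac{|M^{-1}F \setminus F|}{|F|},
\]
as required. The argument is essentially a careful disjoint-union bookkeeping exercise; the only place where genuine geometry enters is the disjointness in the initial decomposition (from $M \subset L$) and the packing bound $|C_a| \leq |F|/|L|$. The most delicate step — though still routine — is the case split for centers in $C_b$, where one must invoke $e \in M$ to ensure that a center lying outside $F$ but contributing to $MC \cap F$ really does land in $M^{-1}F \setminus F$, and $e \in L$ to ensure that a center lying inside $F$ with $Lc \not\subset F$ really does land in $\partial_L F$. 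No appeal to amenability, F{\o}lner conditions, or the comparison property is needed for the inequality itself; those become relevant only when the lemma is subsequently applied to a F{\o}lner set $F$ to extract a density statement.
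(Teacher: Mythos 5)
Your proof is correct and follows essentially the same route as the paper's: bound $|F\cap MC|$ by $|M|$ times the number of contributing centers (your $C'$ is the paper's $C^\times$), split off the centers with $Lc\subset F$ (your $C_a$, the paper's $C^\circ$) via the packing bound $|F|/|L|$, and place the remaining centers in $\partial_L F \cup (M^{-1}F\setminus F)$ exactly as the paper does. No substantive difference.
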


\begin{proof}
    Note that $C$ is also $M$-separated by inclusion. Let $C^\times = \{c\in C : F\cap Mc  \neq \varnothing\}$. Observe that $C^{\times}$ is finite, as $C^\times \subset M^{-1}F$. By the fact that $C$ is $M$-separated, we have \[
        |F\cap MC| \leq \sum_{c\in C^\times}|M| = |M||C^\times|.
    \] Now let $C^\circ = \{c\in C : Lc\subset F\}$. Observe that $C^\circ \subset C^\times$. We emphasize that $C^\times$ is given in terms of $M$, while $C^\circ$ is given in terms of $L$. By definition, we have $LC^\circ \subset F$, in which case \[
        |F| \geq |LC^\circ| = |L||C^\circ|
    \] where the equality is a consequence of the fact that $C$ is $L$-separated.  Hence $|C^\circ| \leq |F|/|L|$. 
    
    Let $c \in C^\times \setminus C^\circ$ be fixed, in which case $F\cap Mc \neq \varnothing$ and $Lc \not\subset F$.  Therefore, if $c\in F$ then $c \in \partial_L F$, while if $c \notin F$ then $c \in M^{-1}F\setminus F$. This demonstrates that \[
        C^\times \setminus C^\circ \subset (\partial_L F) \cup (M^{-1}F\setminus F).
    \] From this, we see that \begin{align*}
        |F\cap MC| &\leq |M||C^\times|\\
        &= |M||C^\circ| + |M||C^\times \setminus C^\circ|\\
        &\leq |M|\frac{|F|}{|L|} + |M||\partial_L F| + |M| |M^{-1}F\setminus F|.
    \end{align*} After dividing by $|F|$, we obtain the conclusion.
\end{proof}

With $M$ and $L$ fixed as in the above lemma, if one chooses $F = F_n$ and lets $n$ approach infinity, then one may easily see that $\overline{D}(MC) \leq |M|/|L|$ whenever $C\subset G$ is $L$-separated. 
However, what is especially significant for our purposes here is that the density of $MC$ can be estimated by sets $F$ which are sufficiently large with respect to $M$ and $L$ alone, and there is no dependence on $C$ other than the fact that $C$ is $L$-separated. If $C$ were an arbitrary subset satisfying $\overline{D}(C) < \varepsilon$, then that density is not in general approximated by finite subsets except for very large ones, depending on $C$.


\subsection{Shifts and subshifts}

In this section, we briefly review symbolic dynamics for amenable groups and state a few useful lemmas. For a more thorough introduction to symbolic dynamics and dynamics on amenable groups, see \cite{lind-marcus} and \cite{kerr-li}.

\begin{definition}[Labelings and patterns]
Let $\A$ be a finite alphabet of symbols, endowed with the discrete topology. A function $x : G \to \A$ is an \textit{$\A$-labeling} of $G$. The set of all such labelings is denoted $\A^G$, which is endowed with the product topology. Given a finite subset $F \subset G$, a function $p : G \to \A$ is called a \textit{pattern}, said to be of \textit{shape} $F$. The set of all patterns of shape $F$ is denoted $\A^F$, and the set of all patterns of any shape is denoted $\A^*$. 
\end{definition}

Given a point $x \in \A^G$ and a finite subset $F \subset G$, in this paper we shall take $x(F)$ to mean the restriction of $x$ to $F$, which is itself a pattern of shape $F$. This is normally denoted $x|_F \in \A^F$, but we raise the $F$ from the subscript for readability.

\begin{definition}[Shifts and subshifts]
\label{def:subshift}
    The group $G$ acts on $\A^G$ by way of translations; for each fixed $g \in G$ we have a homeomorphism $\sigma^g : \A^G \to \A^G$ given by $\sigma^g(x)(g_1) = x(g_1g)$ for every $g_1 \in G$ and $x \in X$. The action $\sigma = (\sigma^g)_g$ is called the \textit{shift action} of $G$ on $\A^G$, and together $(\A^G,\sigma)$ is a dynamical system called the \textit{full shift} over $\A$. A \textit{subshift} is a subset $X \subset \A^G$ which is $\sigma$-invariant and closed in the topology of $\A^G$. 
\end{definition}

A \textit{fixed point} is a point $x\in \A^G$ such that $x(g_1) = x(g_2)$ for every $g_1$, $g_2\in G$, in which case $X = \{x\}$ is a trivial subshift. We shall say a subshift $X$ is \textit{nontrivial} if it contains at least two points.

\begin{definition}[Patterns in subshifts]
    A pattern $p \in \A^F$ is said to \textit{appear} in a point $x \in \A^G$ {at} an element $g \in G$ if $\sigma^g(x)(F) = p$. The set of all patterns of shape $F$ appearing in any point of $X$ is denoted $\P(F,X) \subset \A^F$. The set of all patterns of any shape appearing in any point of $X$ is denoted $\P(X) \subset \A^*$.
\end{definition}

Given a subshift $X \subset \A^G$ and a (finite or infinite) collection of patterns $\F \subset \A^*$, one may construct a subshift $X_0 \subset X$ by expressly \textit{forbidding} the patterns in $\F$ from appearing in the points of $X$. That is, \[
    X_0 = \{x\in X : \text{no pattern from $\F$ appears in $x$}\}.
\] We denote the subshift $X_0$ by $\langle X \mid \F \rangle$. Every subshift may be realized in this form; indeed, $X = \langle \A^G \mid \A^* \setminus \P(X)\rangle$ holds for every subshift $X \subset \A^G$. 

Next we review some special classes of subshifts.

\begin{definition}[Strongly aperiodic subshifts]
\label{def:aperiodic}
    A point $x$ is \textit{aperiodic} if $\sigma^g(x) = x$ only when $g = e$. A subshift $X$ is \textit{strongly aperiodic} if every $x \in X$ is aperiodic. In other words, the action $\sigma$ is \textit{free} on $X$.
\end{definition}

\begin{definition}[Shifts of finite type]
    A subshift $Y \subset \A^G$ is a \textit{shift of finite type (SFT)} if there exists a finite collection of patterns $\F \subset \A^*$ such that $Y = \langle \A^G \mid \F \rangle$. For such a subshift, it is always possible to take $\F$ in the form $\A^K \setminus \P(K,Y)$ for some finite subset $K \subset G$. In this case, we say that $K$ \textit{witnesses} $Y$ as an SFT.
\end{definition}

We will need the following elementary lemma; see
\cite[Lemma~2.8]{bland-mcgoff-pavlov}.


\begin{lemma}
\label{lem:sft-excision}
    Let $Y$ be an SFT witnessed by $K \subset G$, let $y_1$, $y_2 \in Y$ be arbitrary points and let $F\subset G$ be a finite subset. If $y_1(\partial_{KK^{-1}}F) = y_2(\partial_{KK^{-1}}F)$, then the point $y$ given by $y(g) = y_1(g)$ if $g \in F$ and $y(g) = y_2(g)$ if $g\in G\setminus F$ is a point belonging to $Y$.
\end{lemma}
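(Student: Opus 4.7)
The plan is to verify directly that $y$ satisfies the SFT constraint at every translate of the witness set $K$. Since $Y$ is witnessed by $K$, we have $y \in Y$ if and only if, for every $g \in G$, the pattern $\sigma^g(y)(K) \in \P(K,Y)$; equivalently, the restriction of $y$ to $Kg$ (pulled back by right translation) must be a valid $K$-pattern appearing in $Y$. So I would fix an arbitrary $g \in G$ and split into cases depending on the position of $Kg$ relative to $F$.

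The two easy cases are $Kg \subset F$ and $Kg \subset G \setminus F$. In the first, the restriction of $y$ to $Kg$ coincides with that of $y_1$, so $\sigma^g(y)(K) = \sigma^g(y_1)(K) \in \P(K,Y)$ because $y_1 \in Y$. In the second, the same argument works with $y_2$ in place of $y_1$.

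The only nontrivial case is when $Kg$ straddles the boundary of $F$, i.e., $Kg \cap F \neq \varnothing$ and $Kg \not\subset F$. Here I would invoke Lemma~\ref{lem:intersect-int-implies-containment} in contrapositive form: since $Kg \not\subset F$, the set $Kg$ cannot meet $\int_{KK^{-1}}F$. Because $F = \partial_{KK^{-1}}F \sqcup \int_{KK^{-1}}F$, this forces $Kg \cap F \subset \partial_{KK^{-1}}F$. The hypothesis $y_1(\partial_{KK^{-1}}F) = y_2(\partial_{KK^{-1}}F)$ then gives $y_1 = y_2$ on $Kg \cap F$, while by construction $y = y_2$ on $Kg \setminus F$. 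Consequently $y$ agrees with $y_2$ on all of $Kg$, and $\sigma^g(y)(K) = \sigma^g(y_2)(K) \in \P(K,Y)$.

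The main (minor) obstacle is precisely this third case; the role of $KK^{-1}$ rather than $K$ in the definition of the boundary is exactly what is needed for Lemma~\ref{lem:intersect-int-implies-containment} to apply, ensuring that any $K$-window that straddles $F$ necessarily sits over the thickened boundary where the two points are forced to coincide. With this observation in place, the conclusion $y \in Y$ follows from the pointwise SFT criterion.
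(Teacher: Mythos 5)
The paper cites this lemma from an external reference (\cite[Lemma~2.8]{bland-mcgoff-pavlov}) and does not include a proof, but your argument is the standard one and is correct. The case analysis on the position of $Kg$ relative to $F$ is the right decomposition, and the appeal to Lemma~\ref{lem:intersect-int-implies-containment} in contrapositive form is exactly the mechanism that makes the straddling case work: if $Kg \not\subset F$ then $Kg$ misses $\int_{KK^{-1}}F$, hence $Kg \cap F$ lies in $\partial_{KK^{-1}}F$ where $y_1$ and $y_2$ agree, so $y$ coincides with $y_2$ on all of $Kg$ and the $K$-window is legal. This is precisely why the thickened boundary $\partial_{KK^{-1}}F$ (rather than $\partial_K F$) appears in the hypothesis, as you correctly observe.
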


\begin{definition}[Strong irreducibility]
\label{def:strong-irreducibility}
    A subshift $Y \subset \A^G$ is \textit{strongly irreducible} if there exists a finite subset $K \subset G$ with $e\in K$ such that for any finite subsets $F_1$, $F_2 \subset G$ and allowed patterns $p_1 \in \P(F_1, Y)$ and $p_2 \in \P(F_2, Y)$, if $KF_1$ is disjoint from $F_2$ then there is a point $y \in Y$ such that $y(F_1) = p_1$ and $y(F_2) = p_2$. In this case, we say that $K$ \textit{witnesses} $Y$ as strongly irreducible or that $Y$ is a strongly irreducible subshift of \textit{parameter} $K$. 
\end{definition}

Note that strong irreducibility is preserved under taking factors (Definition~\ref{def:homomorphism}) and products (Definition~\ref{def:product-systems}) of subshifts. Strong irreducibility is equivalent to a seemingly stronger mixing condition, as the following lemma demonstrates. The proof relies on a standard compactness argument; we omit it for brevity.


\begin{lemma}
\label{lem:strong-irred-implies-patch-mixing}
    Suppose $Y$ is a strongly irreducible subshift of parameter $K \subset G$. For any finite subset $F \subset G$ and points $y_1$, $y_2 \in Y$, there is a point $y \in Y$ such that \[
        y(F) = y_1(F) \text{ and } y(G \setminus KF) = y_2(G \setminus KF).
    \]
\end{lemma}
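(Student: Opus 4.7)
The plan is to realize $y$ as a limit point of a sequence produced by repeatedly invoking strong irreducibility on larger and larger finite portions of $G\setminus KF$. Since $G$ is countable, we may choose an exhausting ascending sequence $(F_2^{(n)})_n$ of finite subsets of $G\setminus KF$ with $\bigcup_n F_2^{(n)} = G\setminus KF$. Let $p_1 = y_1(F) \in \P(F,Y)$ and for each $n$ let $p_2^{(n)} = y_2(F_2^{(n)}) \in \P(F_2^{(n)},Y)$.

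Second, I would apply the strong irreducibility hypothesis directly. By construction $F_2^{(n)}$ is disjoint from $KF$, so the defining property of strong irreducibility with parameter $K$ produces a point $y^{(n)} \in Y$ satisfying $y^{(n)}(F) = p_1$ and $y^{(n)}(F_2^{(n)}) = p_2^{(n)}$. This is the only place the hypothesis is used.

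Third, I would extract a cluster point. The full shift $\A^G$ is compact in the product topology, and $Y$ is closed in $\A^G$, so $Y$ is compact; hence the sequence $(y^{(n)})_n$ has a subsequence $(y^{(n_k)})_k$ converging to some $y \in Y$. Convergence in the product topology is coordinatewise, so for each $g \in G$ we have $y^{(n_k)}(g) = y(g)$ for all sufficiently large $k$. For $g \in F$ this yields $y(g) = p_1(g) = y_1(g)$, while for $g \in G\setminus KF$, the exhaustion property guarantees that eventually $g \in F_2^{(n_k)}$, so $y(g) = y_2(g)$.

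There is no real obstacle here: the argument is a routine compactness-plus-exhaustion upgrade from the finite mixing property in Definition~\ref{def:strong-irreducibility} to the corresponding statement about infinite patterns, and the mild technical point is only that the forbidden buffer $KF$ on the $y_1$-side remains the same finite set throughout, so each individual application of strong irreducibility uses the same parameter $K$ without any need to enlarge it.
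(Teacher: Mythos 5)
Your proof is correct and is exactly the ``standard compactness argument'' the paper alludes to when omitting the proof: exhaust $G\setminus KF$ by finite sets, apply strong irreducibility (noting $KF\cap F_2^{(n)}=\varnothing$), and pass to a cluster point in the compact set $Y$. Nothing further is needed.
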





Next we review topological entropy of subshifts.

\begin{definition}[Entropy]
\label{def:entropy} 
    The \textit{(topological) entropy} of a subshift $X$ is given by \[
        h(X) = \lim_{n\to\infty} h(F_n, X),
    \] where $h(F, X) = \frac{1}{|F|} \log |\P(F, X)|$ for each nonempty finite subset $F\subset G$ and $(F_n)_n$ is a F\o lner sequence for $G$.
\end{definition}

It is well known \cite[Theorem~4.38]{kerr-li} that the limit above exists and does not depend on the choice of F\o lner sequence.

Next we review homomorphisms between subshifts.

\begin{definition}[Homomorphisms]
\label{def:homomorphism}
    Let $\A_X$ and $\A_Y$ be finite alphabets and let $X \subset \A_X^G$ and $Y \subset \A_Y^G$ be subshifts. A \textit{homomorphism} between $X$ and $Y$ is a map $\phi : X \to Y$ that is both continuous and shift-commuting. By the Curtis-Lyndon-Hedlund theorem \cite[Theorem~1.8.1]{springer-book}, a map $\phi : X \to Y$ is a homomorphism if and only if there exists a finite subset $F \subset G$ and a function $\Phi : \P(F,X) \to \A_Y$ such that for every $g\in G$ and $x\in X$ it holds that \[
        \phi(x)(g) = \Phi(\sigma^g(x)(F)).
    \] If $\phi$ is surjective, then $\phi$ is said to be a \textit{factor map}, $X$ is said to \textit{factor onto} $Y$, and $Y$ is said to be a \textit{factor} of $X$. If $\phi$ is injective, then $\phi$ is said to be an \textit{embedding} and $X$ is said to \textit{embed into} $Y$.
\end{definition}

If $X$ embeds into $Y$, then $h(X) \leq h(Y)$. If $X$ factors onto $Y$, then $h(X) \geq h(Y)$. 

Next we review the primary way by which we join two subshifts together.

\begin{definition}[Product systems]
\label{def:product-systems}
    Let $\A$ and $\Lambda$ be finite alphabets and let $X \subset \A^G$ and $T \subset \Lambda^G$ be subshifts. The product system $X \times T$ equipped with the action $\varsigma$ given by $\varsigma^g(x,t) = (\sigma^g(x), \sigma^g(t))$ is isomorphic to a subshift over the alphabet $\A \times \Lambda$, with $(x,t)$ corresponding to the point $\tilde{x}$ such that $\tilde{x}(g) = (x(g),t(g))$ for each $g\in G$. Abusing notation, we regard $X\times T$ as a subshift of $(\A\times\Lambda)^G$.
\end{definition}

The following is standard; see \cite[Exercise~4.1.5]{lind-marcus} for the case where $G = \mathbb{Z}$.

\begin{lemma}
\label{lem:entropy-of-product-system}
    Let $X$ and $T$ be subshifts. Then $h(X \times T) = h(X) + h(T)$.
\end{lemma}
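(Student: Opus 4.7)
The plan is to show that the count of patterns in the product system factors as a product, and then the entropy equality follows by taking the logarithm and normalizing. Under the identification of $X \times T$ with a subshift of $(\A \times \Lambda)^G$, a pattern of shape $F$ in $X \times T$ is an element of $(\A \times \Lambda)^F$, which canonically corresponds to a pair $(p,q) \in \A^F \times \Lambda^F$. So the first step is to verify the set-theoretic identity
\[
    \P(F, X \times T) \;\cong\; \P(F,X) \times \P(F,T),
\]
under this correspondence, for every finite $F\subset G$.

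For the inclusion into the product, note that if a pair-valued pattern $(p,q)$ appears in some $(x,t)\in X\times T$ at some $g\in G$, then by looking at the two coordinates separately we immediately obtain $p \in \P(F,X)$ and $q\in \P(F,T)$. For the reverse inclusion, given $p\in \P(F,X)$ and $q\in \P(F,T)$, pick $x\in X$ and $g_x\in G$ with $\sigma^{g_x}(x)(F) = p$, and similarly $t\in T$ and $g_t\in G$ with $\sigma^{g_t}(t)(F) = q$. Then $(\sigma^{g_x}(x), \sigma^{g_t}(t)) \in X\times T$, and this point's pair-valued labeling restricted to $F$ is exactly $(p,q)$. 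Thus $(p,q)$ appears in $X\times T$ at the identity.

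Given this, we have $|\P(F,X\times T)| = |\P(F,X)|\cdot|\P(F,T)|$ for every finite $F\subset G$. Taking $F = F_n$, dividing by $|F_n|$, and applying $\log$ yields
\[
    \frac{1}{|F_n|}\log|\P(F_n, X\times T)| = \frac{1}{|F_n|}\log|\P(F_n, X)| + \frac{1}{|F_n|}\log|\P(F_n, T)|.
\]
Sending $n\to\infty$ and invoking Definition~\ref{def:entropy} for each of the three terms completes the proof. There is no real obstacle here; the only thing to be careful about is the canonical identification built into Definition~\ref{def:product-systems} so that patterns of the product subshift really do correspond bijectively to pairs of patterns of the same shape.
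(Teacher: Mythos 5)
Your argument is correct: since $X\times T$ is the full Cartesian product of the two subshifts, patterns of shape $F$ in $X\times T$ correspond exactly to pairs of patterns from $\P(F,X)\times\P(F,T)$ (shift-invariance of $X$ and $T$ lets you translate the two coordinates independently), so the counts multiply and the entropy identity follows by taking logarithms along the F\o lner sequence. The paper omits this proof as standard, and yours is precisely that standard argument, so there is nothing to add.
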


\subsection{Quasi-tilings}

In this section, we review quasi-tilings of amenable groups, which were originally introduced and studied by Ornstein and Weiss \cite{ornstein-weiss}. We also state a theorem of Downarowicz and Huczek which is essential for our main result. 

\begin{definition}[Quasi-tilings]
\label{def:quasi-tiling}
    Let $\S = \{S_1, \ldots, S_r\}$ be a finite collection of finite, nonempty subsets of $G$ which is ``shift-irreducible" in the sense that there is no pair of distinct subsets $S_1$, $S_2 \in \S$ and element $g\in G$ for which $S_1 g = S_2$. 
    We refer to these subsets as \textit{shapes}.
    A \textit{quasi-tiling} of $G$ over $\S$ is an assignment of each shape $S\in \S$ to a (generally infinite) subset $C_S \subset G$ (called the set of \textit{centers} for $S$) such that the sets $\{C_S : S\in \S\}$ are pairwise disjoint and the map $(S,c) \mapsto Sc$ is injective over $\{(S,c) : S\in \S \text{ and } c\in C_S\}$.
    
    A quasi-tiling of $G$ over $\S$ may be encoded as a point of the symbolic space $\Lambda(\S)^G$, where $\Lambda(\S) = \S \cup \{\varnothing\}$ is thought of as an alphabet of $r + 1$ symbols. A point $t \in \Lambda(\S)^G$ encodes the quasi-tiling when $t(c) = S$ if and only if $c\in C_S$ for each $S\in \S$ and $c\in G$, and $t(g) = \varnothing$ otherwise. Here we shall identify $t$ with the quasi-tiling it formally encodes. We shall write $C(t) = \{g\in G : t(g) \neq \varnothing\}$, 
    which is precisely the set $\bigcup_{S\in\S} C_S$. A \textit{tile} of a quasi-tiling $t$ is a subset of $G$ of the form $t(c)c$ where $c\in C(t)$.
\end{definition}

A quasi-tiling $t$ is \textit{disjoint} if $t(g_1)g_1 \cap t(g_2)g_2 = \varnothing$ for every $g_1 \neq g_2 \in G$. A quasi-tiling $t$ is said to \textit{cover} the group $G$ if $\bigcup_{g} t(g)g = G$. An \textit{exact} tiling is one which is both disjoint and covers $G$ (in other words, the tiles of $t$ form a partition of $G$). For most applications, it is not necessary that quasi-tilings be exactly disjoint or exactly covering; it is often sufficient to have a quasi-tiling whose tiles are ``nearly disjoint" and which ``nearly covers" $G$. The following definition formalizes the ``nearly disjoint" condition.

\begin{definition}[Retractions and $\varepsilon$-disjointness]
\label{def:epsilon-disjoint}
    Given a quasi-tiling $t$, a \textit{retraction} of $t$ is any quasi-tiling $\ret(t)$ (which is in general given over a different collection of shapes than $t$) such that $\ret(t)(g) \subset t(g)$ for every $g \in G$. Given $\varepsilon > 0$, a quasi-tiling $t$ is said to be \textit{$\varepsilon$-disjoint} if it has a disjoint retraction $\ret(t)$ such that, for every $c\in C(t)$, it holds that \[
        |t(c) \setminus \ret(t)(c)| < \varepsilon|t(c)|.
    \] In words, every tile of $t$ may be ``retracted" to a subset of proportion at least $1-\varepsilon$ such that the retracted subsets are all pairwise disjoint.
\end{definition}

The following definition formalizes the ``nearly covering" condition.

\begin{definition}[$\rho$-covering]
\label{def:rho-covering}
    Given $\rho \in (0,1)$, a quasi-tiling $t$ is \textit{$\rho$-covering} if \[
        \underline{D}\Big(\bigcup_{g\in G}t(g)g\Big) \geq \rho
    \] where $\underline{D}$ is the lower Banach density (Definition~\ref{def:banach-density}).
\end{definition}

We shall need the following elementary lemma; see \cite[Lemma~3.4]{downarowicz-huczek-zhang}.

\begin{lemma}
\label{lem:covering-of-retract}
    Let $\rho_0$, $\rho_1 \in (0,1)$ be fixed. Suppose $t_0$ is a $\rho_0$-covering quasi-tiling and suppose $t_1$ is a disjoint retraction of $t_0$ such that $|t_1(c)| \geq \rho_1|t_0(c)|$ for each $c\in C(t_0)$. Then $t_1$ is $\rho_0\rho_1$-covering.
\end{lemma}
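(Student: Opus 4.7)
The plan is to bound $|F_n \cap A_1 g|$ from below in terms of $|F_n \cap A_0 g|$, for sufficiently invariant $F_n$ and arbitrary $g$, where $A_i := \bigcup_c t_i(c)c$ for $i \in \{0,1\}$. Since the hypothesis $\underline{D}(A_0) \geq \rho_0$ produces arbitrarily large $n$ with $\inf_g |F_n \cap A_0 g|/|F_n|$ arbitrarily close to $\rho_0$, it suffices to establish the analogous control for $A_1$ with the factor $\rho_1$ inserted and an $o(1)$ boundary loss; letting that loss tend to zero then yields the claim.

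The crux is a pointwise estimate. For any finite $F\subset G$ and $g\in G$, partition $C := C(t_0) = C(t_1)$ (the two agree because $|t_1(c)| \geq \rho_1|t_0(c)| > 0$) into the interior centers $C^\circ = \{c\in C : t_0(c)cg \subset F\}$ and the straddling centers. Since $t_1$ is disjoint and $t_1(c) \subset t_0(c)$, the translated retracted tiles $\{t_1(c)cg : c\in C^\circ\}$ form a pairwise disjoint family of subsets of $F$; combined with $|t_1(c)| \geq \rho_1|t_0(c)|$ this gives
\begin{equation*}
|F \cap A_1 g| \;\geq\; \sum_{c\in C^{\circ}} |t_1(c)| \;\geq\; \rho_1 \sum_{c\in C^{\circ}} |t_0(c)| \;\geq\; \rho_1 \Bigl|\bigcup_{c\in C^{\circ}} t_0(c)cg\Bigr|.
\end{equation*}
Letting $K = \bigcup_{S\in\S} S$, any element $p$ of $F \cap A_0 g$ not covered by an interior tile must lie inside a straddling tile $t_0(c)cg$; writing $p = scg$ with $s \in t_0(c)\subset K$ and choosing some $s' \in t_0(c)$ with $s'cg \notin F$, one finds $s's^{-1}p \notin F$ with $s's^{-1} \in KK^{-1}$, so $p \in \partial_{KK^{-1}} F$. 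Lemma~\ref{lem:bdry-size-bound-for-invariance} therefore gives $|F \cap A_0 g| \leq |\bigcup_{c\in C^\circ} t_0(c)cg| + |KK^{-1}|\cdot|KK^{-1}F \triangle F|$.

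Putting the two together yields the uniform pointwise bound $|F \cap A_1 g| \geq \rho_1 |F \cap A_0 g| - \rho_1|KK^{-1}|\cdot|KK^{-1}F \triangle F|$. Fix $\delta > 0$, set $F = F_n$, and use $\underline{D}(A_0) \geq \rho_0$ to select arbitrarily large $n$ with $\inf_g |F_n \cap A_0 g|/|F_n| \geq \rho_0 - \delta/(2\rho_1)$; since $(F_n)_n$ is F\o lner and $KK^{-1}$ is fixed, restricting to a tail we also have $|KK^{-1}F_n \triangle F_n|/|F_n| < \delta/(2\rho_1|KK^{-1}|)$. Dividing through and taking the infimum over $g$ then yields $\inf_g |F_n \cap A_1 g|/|F_n| \geq \rho_0\rho_1 - \delta$ for arbitrarily large $n$, hence $\underline{D}(A_1) \geq \rho_0\rho_1 - \delta$, and $\delta$ was arbitrary. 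The only subtle point is that $\underline{D}(A_0)$ is defined as a limsup rather than a limit, so one must realize the near-$\rho_0$ infimum and the small F\o lner ratio on the same index; but this is immediate because every tail of a F\o lner sequence remains F\o lner, so the subsequence given by the limsup can always be intersected with such a tail.
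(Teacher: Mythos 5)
Your proof is correct. The paper does not prove this lemma itself but defers to the cited reference (Downarowicz--Huczek--Zhang, Lemma~3.4), and your argument---lower-bounding $|F_n\cap A_1g|$ by summing the disjoint retracted tiles wholly inside $F_n$, charging the straddling tiles to $\partial_{KK^{-1}}F_n$ via Lemma~\ref{lem:bdry-size-bound-for-invariance}, and then combining the $\limsup$ subsequence with a F\o lner tail---is essentially that standard argument, with the subtle $\limsup$-versus-limit point handled correctly.
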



The existence of arbitrarily nice quasi-tilings over countable amenable groups 
(i.e., with arbitrarily large shapes and arbitrarily good near-disjointness and near-covering properties) has been known in some form since 1987, due first to Ornstein and Weiss \cite[I.\textsection 2~Theorem~6]{ornstein-weiss}. This construction was sharpened in 2015 by Downarowicz, Huczek and Zhang who demonstrated that a countable amenable group exhibits an \textit{exact} tiling with arbitrarily large shapes; moreover, one can find a system of such tilings which has topological entropy zero \cite[Theorem~5.2]{downarowicz-huczek-zhang}.

For our purposes, we require not just that a system of arbitrarily nice quasi-tilings exists, but also that one may be obtained as a topological factor of a given subshift $X$. 
We have the following theorem of Downarowicz and Huczek \cite[Lemma~3.4]{downarowicz-huczek}. Not every property claimed here was stated in their theorem (here we state property (5) and the fact that the map $t\mapsto \ret(t)$ in (4) is a homomorphism), but a close reading of their proof reveals that it may be minorly modified to conclude this slightly stronger result. Here we provide a short argument which fills in the gaps, appealing to the construction in \cite{downarowicz-huczek} as required.

\begin{theorem}
\label{thm:dynamical-quasitilings}
    Let $X$ be a strongly aperiodic subshift, let $\varepsilon \in (0,1/3)$ be arbitrary, and suppose that $r \in \mathbb{N}$ satisfies $(1-\varepsilon/2)^r < \varepsilon$. For any $n_0 \in \mathbb{N}$ and finite subset $L \subset G$, there is a collection of shapes $\S = \{F_{n_1}, \ldots, F_{n_r}\}$ and a subshift $T \subset \Lambda(\S)^G$ such that \begin{enumerate}
        \item $n_0 < n_1 < \cdots < n_r$,
        \item there is a factor map $\T : X \to T$,
        \item every $t \in T$ is $(1-\varepsilon)$-covering,
        \item every $t \in T$ is $\varepsilon$-disjoint as witnessed by a continuous and shift-commuting retraction map $t\mapsto \ret(t)$, and
        \item for every $t \in T$, the set $C(t)$ is $L$-separated.
    \end{enumerate}
\end{theorem}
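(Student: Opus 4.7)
The plan is to invoke the construction of Downarowicz and Huczek \cite[Lemma~3.4]{downarowicz-huczek} as a black box, which directly yields shapes $\S = \{F_{n_1}, \ldots, F_{n_r}\}$ and a factor map $\T : X \to T$ satisfying (1), (2), (3), and the $\varepsilon$-disjointness clause of (4). What remains is to verify that the retraction $t \mapsto \ret(t)$ may be taken to be a homomorphism and that $C(t)$ can be made $L$-separated. Both are obtained by inspecting the Downarowicz--Huczek construction rather than by appealing to it as a black box.

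For the continuity and shift-commutation of the retraction, note that in their construction every tile $F_{n_i} c$ overlaps only boundedly many others, and all such competing centers lie within the bounded neighborhood $F_{n_r} F_{n_r}^{-1} c$. A disjoint retraction may therefore be specified by resolving each overlap via a fixed, shift-invariant tiebreaking rule (for instance, preferring the center whose shape has the largest index, breaking ties by a fixed enumeration of $G$). Since this rule depends only on a bounded window around each point of $t$, it defines a block code on $T$, which by the Curtis--Lyndon--Hedlund theorem is automatically a continuous shift-commuting map.

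For the $L$-separation property, I would strengthen the greedy center-selection step. The Downarowicz--Huczek proof proceeds iteratively for $i = r, r-1, \ldots, 1$, at each stage declaring $c \in C_{F_{n_i}}$ whenever a certain local condition on $x$ is met and $F_{n_i} c$ has not yet been too densely covered by the tiles selected at earlier stages. I would add to this rule the further requirement that $L^{-1} L c$ be disjoint from the union of centers selected so far. This extra constraint is still local and shift-invariant, so $\T$ remains a factor map onto the modified subshift $T$. By choosing the initial index $n_0$ large enough that $|L^{-1} L|$ is an arbitrarily small fraction of $|F_{n_1}|$, the additional centers rejected by the $L$-separation rule represent an arbitrarily small loss in coverage at each stage, which can be absorbed by slightly increasing $r$ or by tightening the $(K,\varepsilon)$-invariance demanded of $F_{n_1}, \ldots, F_{n_r}$.

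The main obstacle I anticipate is propagating the quantitative estimates through the modified construction: the inequality $(1-\varepsilon/2)^r < \varepsilon$ is tight, so the cumulative coverage loss from enforcing $L$-separation must be shown not to degrade the $(1-\varepsilon)$-covering conclusion of (3). This is primarily a bookkeeping exercise, but it requires careful tracking of constants through the $r$-step iteration, using Lemma~\ref{lem:approx-dens-of-L-disjoint-subs} to control the density contribution of the excluded ``forbidden'' positions around previously placed centers.
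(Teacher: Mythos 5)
Your plan for property (5) diverges from the paper's and leaves the hardest part unproven. You modify the greedy center-selection of Downarowicz--Huczek by adding an exclusion zone $L^{-1}Lc$ around each chosen center, and you then concede that the $(1-\varepsilon)$-covering bound must be re-derived through all $r$ stages, offering as a fallback to ``slightly increase $r$'' --- but $r$ is fixed in the statement (any $r$ with $(1-\varepsilon/2)^r<\varepsilon$ must work, and exactly $r$ shapes are needed downstream, where $r$ marker patterns are built), so that fallback is unavailable and the bookkeeping you defer is precisely the missing content. The paper never touches the selection step at all: since the gaps $n_i - n_{i-1}$ may be taken arbitrarily large, it imposes two extra conditions on the shapes themselves, namely (S1) $|F_{n_i}\cap F_{n_i}\ell|\geq(1-\varepsilon)|F_{n_i}|$ for all $\ell\in L^{-1}L$ and (S2) $F_{n_{i-1}}L^{-1}L\subset F_{n_i}$, runs the original construction verbatim (so (1)--(4) and all covering estimates are untouched), and then proves $L$-separation is \emph{automatic}: if $Lc_1\cap Lc_2\neq\varnothing$ for centers of the same shape $S$, then (S1) forces $|Sc_1\cap Sc_2|\geq(1-\varepsilon)|S|$ while the disjoint retraction bounds this overlap by $2\varepsilon|S|$, contradicting $\varepsilon<1/3$; for different shapes, (S2) forces the smaller tile inside the larger, which contradicts disjointness of the retraction via the identity that the union of previously laid tiles equals the union of their retractions. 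You should adopt some such post hoc argument, or else actually carry out the modified covering analysis.

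There is also a gap in your treatment of (4). Breaking ties ``by a fixed enumeration of $G$'' is not shift-invariant: shifting a quasi-tiling moves its centers to different group elements, changing their order in the enumeration, so the resulting retraction need not commute with $\sigma$. More seriously, even with a genuinely shift-invariant preference rule, resolving overlaps locally does not obviously yield the per-tile bound $|t(c)\setminus\ret(t)(c)|<\varepsilon|t(c)|$ demanded by Definition~\ref{def:epsilon-disjoint}: the black-box statement only guarantees that \emph{some} good disjoint retraction exists, whereas a given tile stripped of its overlaps with all tiles preferred over it could lose more than an $\varepsilon$-fraction, and you give no estimate. The paper sidesteps both issues by using the explicit Downarowicz--Huczek retraction $\ret(t)(c)=t(c)\setminus\bigl(\bigcup_{c_0\in C(t)_{<c}}t(c_0)c_0\bigr)c^{-1}$, whose per-tile loss bound is already proved there, and merely observing that this formula is determined by a finite window around $c$, hence is a block code and therefore continuous and shift-commuting.
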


\begin{proof}
    In \cite{downarowicz-huczek}, $n_1$ is chosen as $n_0 + 1$ and $n_i$ is inductively chosen so that $F_{n_i}$ is $(F_{n_j}, \delta_j)$-invariant for every $j < i$, where $\delta_j > 0$ is specified in the construction. From this, we infer that $n_i - n_{i-1}$ may be arbitrarily large for each $i = 1, \ldots, r$. We therefore additionally assume that for each $i = 1, \ldots, r$, we have that \begin{enumerate}[(S1)]
        \item $|F_{n_i} \cap F_{n_i}\ell| \geq (1-\varepsilon)|F_{n_i}|$ for every $\ell \in L^{-1}L$, and 
        \item $F_{n_{i-1}} L^{-1} L \subset F_{n_i}$. 
    \end{enumerate} This is possible because the sequence $(F_n)_n$ was chosen to be both left and right F\o lner, to be ascending in $n$, and to satisfy $\bigcup_n F_n = G$ (Definition~\ref{def:folner-seq}).
    
    Let $S_i = F_{n_i}$ for each $i = 1, \ldots, r$ and let $\S = \{S_1, \ldots, S_r\}$. Our modifications to the choice of $\S$ preserve the invariance conditions assumed by \cite{downarowicz-huczek}. We now define and construct everything else as in \cite{downarowicz-huczek}, summarized below.
    
    

    Let $x \in X$ be fixed. In \cite{downarowicz-huczek}, the quasi-tiling $t = \T(x) \in \Lambda(\S)^G$ is constructed inductively, with the tiles of shape $S_r$ chosen first, then $S_{r-1}$, and so on, down to $S_1$. Consequently, for each $c\in C(t)$ there is a well-defined subset $C(t)_{<c} \subset C(t)$ which denotes the centers of all the tiles laid \textit{before} the tile at $c$ in the inductive process. Some tiles of the same shape are laid simultaneously, so the implied ordering given by $c_1 < c_2$ if $c_1 \in C(t)_{<c_2}$ is not {total}. But, tiles laid simultaneously in the construction of \cite{downarowicz-huczek} are necessarily disjoint. 
    
    This in hand, the retraction map $t\mapsto \ret(t)$ is given by \[
        \ret(t)(c) = t(c) \setminus \Big( \bigcup_{c_0 \in C(t)_{<c}} t(c_0)c_0\Big)c^{-1}
    \] for each $c \in C(t)$, and $\ret(t)(g) = \varnothing$ otherwise. In \cite{downarowicz-huczek} it is shown by induction that this is a disjoint retraction satisfying $|t(c)\setminus\ret(t)(c)| < \varepsilon|t(c)|$ for each $c\in C(t)$. Moreover, it is quick to check that the map $t\mapsto \ret(t)$ is continuous and shift-commuting, as 
    the elements $c_0 \in C(t)_{<c}$ with $t(c)c \cap t(c_0)c_0 \neq \varnothing$ are determined by $\sigma^c(t)(F)$ for some (possibly very large) finite subset $F \subset G$. The construction in \cite{downarowicz-huczek} also gives that $t$ is $(1-\varepsilon)$-covering. 
    
    Let $T = \T(X) \subset \Lambda(\S)^G$. From the observations of the previous paragraph, we see that properties (1) through (4) hold. For the theorem, it remains to check property (5): that $C(t)$ is $L$-separated for each $t \in T$.
    Let $t \in T$ be fixed and suppose to the contrary that $Lc_1 \cap Lc_2 \neq \varnothing$ for some distinct $c_1 \neq c_2 \in C(t)$, in which case $c_1c_2^{-1} \in L^{-1}L$. Let $t(c_1) = S_i$ and $t(c_2) = S_j$ and suppose without loss of generality that $i \leq j$. 
    
    If $i = j$ then let $S = S_i = S_j$, in which case $|Sc_1 \cap Sc_2| = |S\cap Sc_1c_2^{-1}| \geq (1-\varepsilon)|S|$ by the assumed condition (S1). Note that $R_1 = \ret(t)(c_1) \subset S$ and $R_2 = \ret(t)(c_2) \subset S$ are subsets such that $|S \setminus R_1| < \varepsilon|S|$ and $|S \setminus R_2| < \varepsilon|S|$ by construction. Moreover, by construction $R_1c_1 \cap R_2c_2 = \varnothing$, in which case $Sc_1 \cap Sc_2 \subset (Sc_1 \setminus R_1 c_1) \cup (Sc_2 \setminus R_2 c_2)$. This implies that \[
        |Sc_1 \cap Sc_2| \leq |Sc_1 \setminus R_1c_1| + |Sc_2 \setminus R_2 c_2| < 2\varepsilon|S|.
    \] We thus obtain $(1-\varepsilon)|S| \leq |Sc_1\cap Sc_2| < 2\varepsilon|S|$, which contradicts the assumption that $\varepsilon < 1/3$.
    
    In the second case, suppose $i < j$. Then, $c_1 c_2^{-1} \in L^{-1} L$ implies that $S_ic_1c_2^{-1} \subset S_iL^{-1}L \subset S_j$ by the assumed condition (S2). Hence, $t(c_1)c_1 \subset t(c_2)c_2$. This doesn't immediately contradict the $\varepsilon$-disjointness of $t$, as it may be the case that $t(c_1)c_1 \subset t(c_2)c_2 \setminus \ret(t)(c_2)c_2$. However, here we appeal to a property of the retraction map which is easily checked by induction. For each $c \in C(t)$, we have that \begin{equation*}\tag{R1}
        \bigcup_{c_0 \in C(t)_{<c}} t(c_0)c_0 = \bigcup_{c_0 \in C(t)_{<c}} \ret(t)(c_0)c_0.
    \end{equation*} Moreover, the assumption that $i < j$ implies that $c_2 \in C(t)_{<c_1}$, as the tiles are placed in order of largest to smallest. In that case, we have that \[
        \ret(t)(c_1)c_1 \subset t(c_1)c_1 \subset t(c_2)c_2 \subset \bigcup_{c_0 \in C(t)_{<c_1}} t(c_0)c_0.
    \] This, together with property (R1), implies that $\ret(t)(c_1)c_1$ intersects $\ret(t)(c_0)c_0$ for some $c_0 \in C(t)_{<c_1}$, which contradicts the disjointness of $\ret(t)$.
    
    
    
    This covers all cases, so we conclude that $C(t)$ must be $L$-separated for every $t \in T$. With this, we have verified all properties for the theorem not already proved in \cite{downarowicz-huczek}.
\end{proof}

One sees as a consequence of the above theorem that $X$ factors directly onto $\ret(T)$, a system of disjoint quasi-tilings with arbitrarily large shapes and near-covering of $G$. This is stated by Downarowicz and Huczek \cite[Corollary~3.5]{downarowicz-huczek}. However, one has to give up control of the number of tile shapes in exchange for perfect disjointness of the tiles. 

For our purposes, we shall make use of the intermediate factor $T$. Each $t \in T$ carries all of the information needed to construct a nice, disjoint quasi-tiling (by way of taking the retraction $\ret(t)$); we retain control of the ``density" of that information by controlling the number of tile shapes and distributing the centers of the tiles arbitrarily sparsely throughout the group.


\subsection{Comparison property}

In this section, we turn our attention to the case where $G$ has the comparison property. In short, with the comparison property one may demonstrate that, if $\varepsilon$ is sufficiently small and if the shapes in $\S$ are sufficiently large, then the subshift $T$ in Theorem~\ref{thm:dynamical-quasitilings} factors onto a system  $T_1$ of exact tilings.

For a thorough discussion of the comparison property for countable amenable groups and its consequences, see \cite{downarowicz-zhang-symb-ext}. Here, we only repeat that the class of groups with the comparison property includes all countable groups with no finitely generated subgroup of exponential growth, and it is still unknown whether there exists a countable amenable group without the comparison property.





The following theorem is a consequence of the main results of \cite[Proposition~4.3, Theorem~4.7]{downarowicz-zhang-comparison}, also appearing in \cite[Proposition~6.10, Theorem~6.12]{downarowicz-zhang-symb-ext}. We state the result in this form for convenience; this is the form in which we shall appeal to the comparison property later in our construction.

Given a subshift $T$, suppose we assign to each $t\in T$ a subset $G_t \subset G$. We can encode each subset $G_t$ by its indicator function $\chi_{G_t} \in \{0,1\}^G$. We say that the assignment $t \mapsto G_t$ is continuous and shift-commuting if the map $t\mapsto \chi_{G_t}$ is continuous and shift-commuting in the usual sense (Definition~\ref{def:homomorphism}). Equivalently, there is a finite subset $F \subset G$ and a collection of patterns $\mathcal{G} \subset \P(F,T)$ such that for every $g\in G$ and $t \in T$, we have $g\in G_t$ if and only if $\sigma^g(t)(F) \in \mathcal{G}$.

\begin{theorem}
\label{thm:comparison-injection}
    Suppose $G$ has the comparison property. Let $T$ be a subshift over $G$ and suppose for every $t \in T$ we have corresponding disjoint subsets $A_t$, $B_t\subset G$ such that \begin{enumerate}
        \item the assignments $t \mapsto A_t$, $B_t$ are continuous and shift commuting, and
        \item there exists an $\varepsilon > 0$ such that $\underline{D}(B_t) - \overline{D}(A_t) > \varepsilon$ for every $t\in T$.
    \end{enumerate}
    
    Then there is a family of injections $\phi_t : A_t \to B_t$ induced by a block code, in the sense that there is a finite subset $F \subset G$ and a function $\Phi : \P(F,T) \to F$ such that for every $t\in T$ and every $g\in A_t$ it holds that \[
        \phi_t(g) = \Phi(\sigma^g(t)(F))g.
    \]
\end{theorem}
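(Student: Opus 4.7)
The plan is to translate the hypotheses into the formulation of the comparison property due to Downarowicz and Zhang, and invoke their main results. First, observe that since $t \mapsto A_t$ and $t \mapsto B_t$ are continuous and shift-commuting, each is encoded by a continuous shift-commuting map $T \to \{0,1\}^G$. Equivalently, there are finite subsets $F_A$, $F_B \subset G$ and pattern collections $\mathcal{G}_A \subset \P(F_A, T)$, $\mathcal{G}_B \subset \P(F_B, T)$ such that $g \in A_t$ if and only if $\sigma^g(t)(F_A) \in \mathcal{G}_A$, and similarly for $B_t$. This packages the data of $A$ and $B$ as clopen cocycles over the topological system $T$, which is precisely the setting to which the Downarowicz-Zhang comparison theorems apply.

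Second, I would invoke the comparison property in the form proved by Downarowicz and Zhang \cite[Theorem~4.7]{downarowicz-zhang-comparison} (equivalently \cite[Theorem~6.12]{downarowicz-zhang-symb-ext}): under the comparison property for $G$ and the uniform density gap $\underline{D}(B_t) - \overline{D}(A_t) > \varepsilon$, there exists a continuous shift-commuting map $\Psi$ from $T$ into a symbolic space which encodes, for each $t \in T$, an injection $\phi_t : A_t \to B_t$. Their construction proceeds by a greedy matching along a carefully chosen F\o lner exhaustion, with the density surplus $\varepsilon$ guaranteeing that each $A_t$-point may be paired to an uncommitted $B_t$-point within a uniformly bounded displacement, say belonging to some finite subset $D \subset G$.

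Third, I would extract the stated block code form. Realize $\Psi$ concretely as a map $T \to (D \cup \{\star\})^G$ whose image at $t$ takes value $\phi_t(g) g^{-1} \in D$ at each $g \in A_t$ and $\star$ elsewhere; the equivariance $\phi_{\sigma^g(t)}(h) = \phi_t(hg) g^{-1}$ verifies this map is shift-commuting. By the Curtis-Lyndon-Hedlund theorem there are a finite window $F_1 \subset G$ and a function on $\P(F_1, T)$ producing $\Psi$. Setting $F = F_1 \cup D$ and defining $\Phi : \P(F,T) \to F$ to return the encoded displacement (assigning any fixed element of $F$ to patterns whose central symbol is $\star$) yields the required form $\phi_t(g) = \Phi(\sigma^g(t)(F)) g$. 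The main obstacle, were one attempting a self-contained proof, would be the second step: obtaining jointly consistent matchings across all $t \in T$ simultaneously, without density defects accumulating at the boundaries of the matching windows. This is the substantive content of the comparison property and the delicate inductive construction of Downarowicz and Zhang, which we invoke here as a black box.
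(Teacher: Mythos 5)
Your proposal is correct and follows essentially the same route as the paper: the paper gives no independent proof of this statement but derives it directly from the Downarowicz–Zhang comparison results (\cite[Proposition~4.3, Theorem~4.7]{downarowicz-zhang-comparison}, \cite[Proposition~6.10, Theorem~6.12]{downarowicz-zhang-symb-ext}), which is exactly the reduction you carry out, with the block-code packaging via Curtis–Lyndon–Hedlund being routine.
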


The following theorem is implicitly proved in both \cite[Theorem~6.3]{downarowicz-zhang-comparison} and \cite[Theorem~7.5]{downarowicz-zhang-symb-ext}. The proof, a construction which we adapt in part of our Theorem~\ref{thm:main-result}, relies on the characterization of the comparison property given in the previous theorem. We appeal to each of these theorems later, utilizing Theorem~\ref{thm:comparison-injection} in Theorem~\ref{thm:main-result} and utilizing Theorem~\ref{thm:quasi-to-exact-tiling-factor} in Theorem~\ref{thm:finding-the-good-target-Y0}.

\begin{theorem}
\label{thm:quasi-to-exact-tiling-factor}
    Let $G$ be a countable amenable group with the comparison property. For every finite subset $K \subset G$ and $\varepsilon > 0$, there is a $\delta > 0$ such that, if $T$ is a system of disjoint, $(1-\delta)$-covering quasi-tilings with $(K,\delta)$-invariant shapes, then $T$ factors onto a system $T_1$ of exact tilings with $(K,\varepsilon)$-invariant shapes, by way of a factor map $\ex : T \to T_1$ such that $C(t) = C(\ex(t))$ for every $t\in T$ and $t(c) \subset \ex(t)(c)$ for every $c\in C(t)$.

\end{theorem}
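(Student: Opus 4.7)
The plan is to apply Theorem~\ref{thm:comparison-injection} to inject the uncovered points of each $t \in T$ into a ``reservoir'' of absorbing slots sitting inside the tiles of $t$, and then extend each tile's shape to include the slots it receives. Given $K$ and $\varepsilon$, I would first fix auxiliary parameters: a finite set $K' \supseteq K$ and a small $\eta > 0$ chosen so that a $(K',\eta)$-invariant finite set remains $(K,\varepsilon)$-invariant after enlargement by a density-$\eta$ subset at bounded displacement. The parameter $\delta > 0$ is then selected sufficiently small relative to $K'$, $\eta$, and the eventual block-code window produced below.

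For each $t \in T$, define the uncovered set $A_t = G \setminus \bigcup_{c \in C(t)} t(c)c$, which satisfies $\overline{D}(A_t) \leq \delta$ by the $(1-\delta)$-covering hypothesis, and define a reservoir $B_t \subseteq \bigcup_{c \in C(t)} t(c)c$ via a selection rule, e.g., for each tile of shape $S \in \S$ pick out a designated subset $R(S) \subseteq S$ of size $\lceil \rho |S| \rceil$ for some fixed $\rho \in (\delta, 1-\delta)$, so that $\underline{D}(B_t) - \overline{D}(A_t) \geq \rho(1-\delta) - \delta > 0$ uniformly in $t$. Both assignments $t \mapsto A_t$, $t \mapsto B_t$ are continuous and shift-commuting, so Theorem~\ref{thm:comparison-injection} furnishes an injection $\phi_t : A_t \to B_t$ induced by a block code with displacement contained in a finite window $F$.

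Define $\ex(t)$ by declaring $C(\ex(t)) = C(t)$ and, for each $c \in C(t)$,
\[
    \ex(t)(c) = t(c) \cup \{g c^{-1} : g \in A_t,\ \phi_t(g) \in t(c) c\}.
\]
Injectivity of $\phi_t$ together with the disjointness of $t$ ensures the enlarged tiles remain pairwise disjoint, and since every uncovered point is matched into some tile, $\ex(t)$ exactly tiles $G$. The assignment $t \mapsto \ex(t)$ is continuous and shift-commuting because $\phi_t$ is a block code, and by construction $t(c) \subseteq \ex(t)(c)$.

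The main obstacle is verifying that the enlarged shapes $\ex(t)(c)$ are $(K,\varepsilon)$-invariant. Since $\phi_t$ has displacement bounded by $F$, one has $\ex(t)(c) \subseteq F^{-1} t(c)$, so that $|K \ex(t)(c) \setminus \ex(t)(c)| \leq |K F^{-1} t(c) \setminus t(c)|$, which is controlled provided $t(c)$ is $(KF^{-1}, \delta)$-invariant for sufficiently small $\delta$, via Lemma~\ref{lem:transferring-invariance-conditions}. The delicate point is that the window $F$ supplied by Theorem~\ref{thm:comparison-injection} depends on the subshift $T$, so our choice of $\delta$ must be robust across all admissible $T$. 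This is addressed by defining $B_t$ through a window fixed in advance (depending only on $K$, $\varepsilon$, $\rho$) and inspecting the proof of Theorem~\ref{thm:comparison-injection} to confirm that $F$ is then bounded in terms of that window together with the uniform density gap, closing the parameter loop on $\delta$.
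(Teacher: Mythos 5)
Your construction is essentially the one the paper relies on: the paper does not prove this theorem from scratch (it cites Downarowicz--Zhang), but it reproduces exactly this argument inside the proof of Theorem~\ref{thm:main-result} --- uncovered set $A_t$, a reservoir $B_t$ built from designated subsets $B(S)\subset S$ of size proportional to $|S|$, the comparison injection $\phi_t:A_t\to B_t$ from Theorem~\ref{thm:comparison-injection}, and tiles enlarged by $\phi_t^{-1}$ of their reservoirs. Your definitions of $A_t$, $B_t$, $\ex(t)$, the density-gap estimate (via Lemma~\ref{lem:covering-of-retract}), the exactness argument, and the block-code/continuity argument all match.

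The one step that does not work as written is the $(K,\varepsilon)$-invariance of the enlarged shapes. Bounding $|K\,\ex(t)(c)\setminus \ex(t)(c)|$ by $|KF^{-1}t(c)\setminus t(c)|$ requires $(KF^{-1},\delta)$-invariance of $t(c)$, but $F$ is produced by Theorem~\ref{thm:comparison-injection} \emph{after} $T$ (hence after $\delta$) is fixed, and that theorem gives no size control on $F$; your proposed escape --- inspecting its proof to bound $F$ in advance --- is not justified by anything stated and is not needed. The correct route ignores where the added points sit and counts how many there are: since $\phi_t$ is injective and each added point of $\ex(t)(c)$ maps into $B(t(c))c$, you get $|\ex(t)(c)\setminus t(c)|\le |B(t(c))|\le \rho|t(c)|+1$, and then Lemma~\ref{lem:transferring-invariance-conditions} (applied with $K\cup\{e\}$, $F_0=t(c)$, $F_1=\ex(t)(c)$) gives
\[
  |K\,\ex(t)(c)\setminus \ex(t)(c)| \;\le\; |Kt(c)\setminus t(c)| + |K|\,\bigl(\rho|t(c)|+1\bigr)
  \;\le\; \bigl(\delta + |K|\rho\bigr)|t(c)| + |K|,
\]
so it suffices to choose $\rho$ small relative to $\varepsilon/|K|$ and $\delta$ small relative to $\rho$ (exactly as the paper does in its Theorem~\ref{thm:main-result} adaptation, where the analogous bound is $|S_2\setminus S_1|<3\varepsilon|S_1|$). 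The finiteness of $F$ is then needed only to see that $\ex$ is induced by a block code and that only finitely many enlarged shapes occur; there is no parameter loop to close.
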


\section{Theorems}



\subsection{Target system}
\label{sect:finding-Y_0}

In order to construct an embedding from a given subshift $X$ into a given subshift $Y$, it will first be necessary for our construction to pass to a subsystem $Y_0 \subset Y$ in a way that preserves most of the conditions on $Y$. In this section, we construct that subshift $Y_0$.

In the following definition, we borrow a phrase from functional analysis.

\begin{definition}
\label{def:separate-elements}
    Let $G$ be a discrete group. We say that a subshift $Y$ over $G$ \textit{separates} elements of $G$ if for every pair of distinct elements $g_1$, $g_2 \in G$, there is a point $y\in Y$ such that $y(g_1) \neq y(g_2)$. Because $Y$ is shift-invariant, this is true if and only if it holds that for each $g\neq e$, there is a point $y\in Y$ such that $y(e) \neq y(g)$.
    
\end{definition}

This condition is not invariant under topological conjugacy. However, a subshift $Y_1$ is conjugate to a subshift $Y$ which separates elements of $G$ if and only if there is a finite subset $F\subset G$ such that, for every $g\neq e$, there is a point $y_1 \in Y_1$ with $\sigma^g(y_1)(F) \neq y_1(F)$. The forward implication follows from the Curtis-Lyndon-Hedlund theorem, and the converse implication follows from passing to a higher block presentation of $Y$.



This condition is similar to the condition that 
$Y$ has no \textit{global period}, meaning that there is no element $g \neq e$ such that $\sigma^g(y) = y$ for every $y \in Y$ (equivalently, the shift action $\sigma$ is \textit{faithful} on $Y$). If a subshift $Y$ separates elements of $G$, then $Y$ necessarily has no global period. The converse holds when $G$ is abelian, but not in general. When $Y$ is strongly irreducible, we have the following partial converse.



\begin{lemma}
\label{lem:global-period-to-sep-elems}
    Let $G$ be a discrete group and let $Y$ be a nontrivial strongly irreducible subshift over $G$. If $Y$ has no global period, then $Y$ is conjugate to a subshift which separates elements of $G$.
\end{lemma}

\begin{proof}
    
    Let $K\subset G$ be a finite subset containing $e$ which witnesses the strong irreducibility of $Y$ and write $K = \{e,k_1,k_2,\ldots,k_n\}$. Because $Y$ has no global period, for each $i \leq n$ there exists a point $y_i \in Y$ and an element $g_i \in G$ such that $\sigma^{k_i}(y_i)(g_i) \neq y_i(g_i)$. 
    
    Let $F = \{e, g_1, g_2, \ldots, g_n\}$ and let $g\in G\setminus\{e\}$ be chosen arbitrarily. Because $Y$ is nontrivial and strongly irreducible, if $g \in G \setminus K$ then we may construct a point $y \in Y$ such that $y(e) \neq y(g)$, in which case $y(F) \neq \sigma^g(y)(F)$. If instead $g = k_i$ for some $i\leq n$, then the fact that $y_i(g_i) \neq \sigma^{k_i}(y_i)(g_i)$ implies that $y_i(F) \neq \sigma^g(y_i)(F)$. 

    We have shown that for every $g\neq e$ there is a point $y\in Y$ for which $y(F) \neq \sigma^g(y)(F)$, in which case $Y$ is conjugate to a subshift which separates elements of $G$ by the observation in the paragraph following Definition~\ref{def:separate-elements}, and the lemma is completed. 
    
    

\end{proof}

The previous lemma demonstrates that when $Y$ is strongly irreducible, we may use conjugacy to pass back and forth between the condition that $Y$ has no global period and the condition that $Y$ separates elements of $G$. Indeed, in the proof of Theorem~\ref{thm:main-result} we shall appeal to the fact that $Y$ has no global period in order to replace $Y$ with a conjugate subshift which separates elements of $G$.

It is not in general true that a strongly irreducible subshift automatically has no global period. However, as noted in the proof of the above lemma, if a subshift $Y$ is nontrivial and strongly irreducible as witnessed by $K \subset G$, then for each $g\notin K$ there is a point $y\in Y$ such that $y \neq \sigma^g(y)$. This implies that any element which is a global period of $Y$ must belong to $K$ (moreover, the subgroup of $G$ consisting of all global periods of $Y$ must be contained in $K$). In this sense, to assume that a strongly irreducible subshift $Y$ also has no global period only imposes finitely many additional conditions on $Y$.

From this, we also see that if $G$ is a group containing no element of finite order (as in a torsion-free group like $\mathbb{Z}^d$), then a nontrivial strongly irreducible subshift over $G$ automatically has no global period. In fact, it is not difficult to show that a strongly irreducible subshift over a group $G$ with no element of finite order must necessarily separate elements of $G$.


We now proceed with the main construction for this section. Given a strongly irreducible SFT $Y$ which separates elements of $G$ and has positive entropy, the following theorem produces a subsystem $Y_0 \subset Y$ which is also strongly irreducible, separates elements of $G$, and has entropy in any arbitrary subinterval of $[0,h(Y)]$. The construction presented below is a modification of the construction presented in \cite[Theorem~4.1]{bland-mcgoff-pavlov}. Here we invoke the comparison property in order to construct a strongly irreducible system of exact tilings of $G$.

\begin{theorem}
\label{thm:finding-the-good-target-Y0}
    Let $G$ be a countable amenable group with the comparison property, let $Y$ be a strongly irreducible SFT over $G$ which separates elements of $G$, and let $\tilde{Y} \subset Y$ be a subshift satisfying $h(\tilde{Y}) < h(Y)$.
    For every subinterval $(a,b) \subset [h(\tilde{Y}), h(Y)]$, there is a strongly irreducible subshift $Y_0$ which separates elements of $G$ and satisfies $\tilde{Y} \subset Y_0 \subset Y$ and $a < h(Y_0) < b$.
\end{theorem}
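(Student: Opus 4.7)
The plan is to carve out $Y_0$ from $Y$ by restricting the allowed patterns on the tiles of a carefully chosen system of exact tilings of $G$, following the scheme of \cite[Theorem~4.1]{bland-mcgoff-pavlov} but replacing the use of quasi-tilings with the exact tilings afforded by the comparison property. First I would apply Theorem~\ref{thm:quasi-to-exact-tiling-factor} to produce a strongly irreducible system $T_1$ of exact tilings with shapes $\S = \{S_1, \ldots, S_r\}$ that are $(K, \eta)$-invariant for any prescribed finite $K$ and $\eta > 0$. I take $K$ to contain $K_Y K_Y^{-1}$, where $K_Y$ witnesses $Y$ as strongly irreducible, and I take $\eta$ very small; a standard construction can arrange $h(T_1)$ to be arbitrarily close to $0$.

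Next I would select, for each shape $S \in \S$, a set $\mathcal{R}(S) \subset \P(S, Y)$ of allowed tile patterns. To enable strong irreducibility of $Y_0$, fix a template $p^*_S \in \P(S, Y)$ and include in $\mathcal{R}(S)$ all patterns of shape $S$ in $Y$ that agree with $p^*_S$ on $\partial_{K_Y K_Y^{-1}} S$; by Lemma~\ref{lem:sft-excision} such patterns freely concatenate across disjoint tiles. To force $Y_1 \subset Y_0$, also include in $\mathcal{R}(S)$ every pattern of shape $S$ appearing in $Y_1$. To force simple separation, include finitely many extra patterns that realize $y(e) \neq y(g)$ for $g$ ranging over a large finite window (by shift-invariance of $Y_0$ this reduces to finitely many $g$). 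For large $S$ the boundary-matching patterns number roughly $e^{h(Y)|S|}$ while the $Y_1$-patterns number at most $e^{(h(Y_1)+o(1))|S|}$, so by discarding boundary-matching patterns I can calibrate $\frac{1}{|S|}\log|\mathcal{R}(S)|$ to any target $h_0 \in (a,b)$.

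Now I define
\[
Y_0 = \bigl\{ y \in Y : \exists\, t \in T_1 \text{ with } \sigma^c(y)(t(c)) \in \mathcal{R}(t(c)) \text{ for every } c \in C(t) \bigr\},
\]
which is closed and shift-invariant, hence a subshift. The containments $Y_1 \subset Y_0 \subset Y$ are immediate; the entropy estimate $h(Y_0) \approx h_0$ follows from a F\o lner pattern-counting argument combining the small entropy of $T_1$ with the calibration of $|\mathcal{R}(S)|$; and strong irreducibility of $Y_0$ follows by first gluing tilings via strong irreducibility of $T_1$ and then gluing $Y$-patterns across ``buffer'' tiles filled with boundary-matching patterns via Lemma~\ref{lem:sft-excision}.

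The main obstacle I anticipate is reconciling the inclusion of $Y_1$-patterns---whose boundary behavior is not controlled by the template $p^*_S$---with the boundary-matching requirement that underwrites the strong irreducibility argument. I would handle this by arranging that each tile containing a $Y_1$-pattern is surrounded, in any tiling $t \in T_1$, by a layer of tiles restricted to boundary-matching patterns, so that any gluing between two $Y_0$-points can always route through the template buffer. Ensuring that this structural separation is compatible with $T_1$ remaining strongly irreducible and with the entropy calibration is the most delicate bookkeeping in the proof.
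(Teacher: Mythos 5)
The central step of your plan does not go through as stated. You claim that patterns of shape $S$ agreeing with a fixed template $p^*_S$ on $\partial_{K_YK_Y^{-1}}S$ ``freely concatenate across disjoint tiles'' by Lemma~\ref{lem:sft-excision}. But excision only lets you replace the contents of a region $F$ inside an \emph{ambient} point that already agrees with the new pattern on $\partial_{K_YK_Y^{-1}}F$. Since the tilings produced by Theorem~\ref{thm:quasi-to-exact-tiling-factor} are exact, adjacent tiles abut with no mixing gap, and strong irreducibility (which requires a gap $K$ between the regions being glued) never yields a single point of $Y$ carrying the template boundary data on \emph{every} tile of a given tiling simultaneously: the per-shape templates were chosen with no regard to the many relative positions in which tiles of the various shapes can abut, and there is no reason the union of all translated template boundaries is realizable in $Y$ at all. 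Without such a globally templated base point you cannot realize independent choices of boundary-matching patterns on the tiles, so both your entropy lower bound (the calibration of $h(Y_0)$ to a target $h_0$) and your strong irreducibility argument (``route through the template buffer'') are unsupported. The difficulty you flag at the end (shielding tiles that carry $Y_1$-patterns) is real but secondary, and the proposed buffer layer would additionally require a shift-invariant designation of buffer tiles, which $T_1$ does not provide.

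The paper's proof is arranged precisely to avoid prescribing boundary data. It works in the product $Z_0 = Y \times T$ and removes aligned blocks one at a time, but only blocks that do not occur in $Y_1$, are not reserved witness patterns, and possess a twin block with identical $\partial^2$-boundary; Lemma~\ref{lem:forbidden-block-removed} then repairs any point by swapping a forbidden block for its twin inside tile interiors, which is what lets strong irreducibility, the containment $Y_1 \subset \pi(Z_n) \subset Y$, and simple separation survive every step. Entropy is controlled not by calibrating a per-shape pattern count but by showing each removal decreases entropy by less than $b-a$ while the terminal system has entropy within $b-a$ of $h(Y_1)$, so some intermediate stage lands in $(a,b)$. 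Note also that your treatment of simple separation (``finitely many extra patterns for $g$ in a large finite window, by shift-invariance'') is not right as stated: separation must hold for every $g \neq e$, and shift-invariance does not reduce this to a finite window; the paper needs its second and third types of witness patterns, with the distinguished symbols $0$ and $1$ placed relative to the exact tiling, exactly to separate $e$ from elements $g$ lying outside every tile shape.
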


\begin{proof}
    Let $\A$ be a finite alphabet such that $Y \subset \A^G$.
    Let $K \subset G$ be a finite subset with $e \in K$ chosen to witness $Y$ as a strongly irreducible SFT. We shall abbreviate $\int^n F = \int_{K^n} F$ and $\partial^n F = \partial_{K^n} F$ for each natural $n \in \mathbb{N}$ and finite subset $F \subset G$ for the remainder of this proof. We shall also abbreviate $\partial^n p = p(\partial^n F)$ for each pattern $p$ of shape $F$.
    
    Choose $\varepsilon > 0$ such that \[
        \varepsilon < \min\Big(\frac{b-a}{3+\log 2 + 2\log|\A|}, \frac{b-a}{5+4\log|\A|}\Big).
    \]
    
    
    It is a theorem of Frisch and Tamuz \cite[Theorem~2.1]{frisch-tamuz} that for any $(K,\delta)$ there exists a strongly irreducible system of disjoint, $(1-\delta)$-covering quasi-tilings of $G$ whose every shape is $(K,\delta)$-invariant and whose entropy is less than $\delta$. This, in combination with Theorem~\ref{thm:quasi-to-exact-tiling-factor} and the fact that strong irreducibility is preserved under factor maps, implies the following. There exists a finite collection of shapes $\S$ and a strongly irreducible system $T \subset \Lambda(\S)^G$ of exact tilings of $G$ such that $h(T) < \varepsilon$ and every shape $S\in \S$ satisfies the following. \begin{enumerate}[(S1)]
        \item $K \subset \int^2 S$,
        \item $|S| > \varepsilon^{-1}$ and $2|S| < e^{\varepsilon|S|}$,
        \item $|\partial^2 S| < \varepsilon|S|$, and
        \item $h(S,\tilde{Y}) < h(\tilde{Y}) + \varepsilon$.
    \end{enumerate}
    
    
    For the majority of this proof, we operate primarily in the product system $Z_0 = Y \times T$. For each finite subset $F \subset G$ and pattern $p \in \P(F,Z_0)$, we shall write $p = (p^Y,p^T)$ where $p^Y \in \P(F,Y)$ and $p^T \in \P(F,T)$. If $F = S$ for some $S \in \S$, then we shall describe $p$ as a ``block".
    
    A block $b \in \P(S,Z_0)$ is called \textit{aligned} if $b^T(e) = S$ (note $e\in K$ and $K\subset S$ by (S1)). Note that if $b$ is aligned then $b^T(s) = \varnothing$ for each $s \in S\setminus\{e\}$, by the fact that every tiling $t\in T$ is disjoint. For a given subshift $Z \subset Z_0$, we denote the subcollection of all aligned blocks of shape $S$ allowed in $Z$ by \[
        \P^a(S,Z) \subset \P(S,Z) \subset (\A \times \Lambda(\S))^S
    \] where the superscript $a$ identifies the subcollection. 
    
    
    Let $\pi : Z_0 \to Y$ be the projection map defined by $\pi(y,t) = y$ for each $(y,t) \in Z_0$, which is a homomorphism. For each subshift $Z \subset Z_0$ and each fixed $z = (y,t) \in Z$, we have $y = \pi(z) \in \pi(Z)$ and $t \in T$, thus $Z \subset \pi(Z) \times T$. Consequently, for each subshift $Z \subset Z_0$ it holds that \[
        h(Z) \leq h(\pi(Z) \times T) = h(\pi(Z)) + h(T) < h(\pi(Z)) + \varepsilon
    \] where above we have used Lemma~\ref{lem:entropy-of-product-system} and the fact that $h(T) < \varepsilon$.
    
    
    
    Let $S \in \S$ be fixed. Here we choose and fix a collection of aligned blocks $\mathcal{W}(S) \subset \P^a(S,Z_0)$. We shall refer to these as ``witness patterns", because they will later allow us to demonstrate (``witness") for each $g \in G$ a point $z \in Z_0$ such that $z^Y(e) \neq z^Y(g)$. These witness patterns shall be of three types. 
    
    For the first type, let $s\in S\setminus\{e\}$ be arbitrary. Because $Y$ separates elements of $G$, there is a pattern $b^Y \in \P(S,Y)$ such that $b^Y(e) \neq b^Y(s)$. If $b^T \in \P(S,T)$ is given by $b^T(e) = S$ and $b^T(s) = \varnothing$ for each $s \in S\setminus\{e\}$, then $b = (b^Y, b^T)$ is an allowed block in $Z_0$ which satisfies $b^Y(e) \neq b^Y(s)$. For each $s \in S\setminus\{e\}$, pick and save one such block $b$ to the collection $\mathcal{W}(S)$. 
    
    For the second type, let $0 \in \A$ be a distinguished symbol of the alphabet. For every $y\in Y$, we can find a block $b \in \P(S,Z_0)$ such that $b(e) = (0,S)$ and $b^Y(\partial^2 S) = y(\partial^2 S)$. 
    This is because $Y$ is strongly irreducible of parameter $K$ and property (S1). For each pattern $y(\partial^2 S) \in \P(\partial^2 S, Y)$, pick and save one such block $b$ to the collection $\mathcal{W}(S)$. 
    
    For the third type, let $1 \in \A$ be a distinguished symbol different from $0$. For each $s \in S$, we can find a block $b \in \P(S,Y)$ such that $b^T(e) = S$ and $b^Y(s) = 1$. For each $s \in S$,  pick and save one such block $b$ to the collection $\mathcal{W}(S)$. This completes the description of the collection $\mathcal{W}(S)$. Note that $|\mathcal{W}(S)| \leq 2|S| + \A^{|\partial^2 S|} < e^{\varepsilon|S|} \cdot |\A|^{\varepsilon|S|}$ by properties (S2) and (S3). 
    
    
    
    We now construct a descending chain of subshifts $(Z_n)_n$ of $Z_0$ and claim that the subshift $Y_0$ desired for the theorem shall be given by $Y_0 = \pi(Z_n)$ for some $n \leq N$. Suppose for induction that $Z_n \subset Z_0$ has been constructed for $n \geq 0$. If there exists a shape $S_n \in \S$ and an aligned block $\beta_n \in \P^a(S_n, Z_n)$ such that \begin{enumerate}[(B1)]
        \item {$\beta_n^Y$ does not appear in $\tilde{Y}$} ($\beta_n^Y \notin \P(S_n, \tilde{Y})$),
        \item {$\beta_n$ is not one of the reserved witness patterns} ($\beta_n \notin \mathcal W(S_n)$), and
        \item there exists an aligned block $b \in \P^a(S_n, Z_n)$ with $b \neq \beta_n$ and $\partial^2 b = \partial^2 \beta_n$, 
    \end{enumerate} then let $Z_{n+1} = \langle Z_n \mid \beta_n\rangle$. If no such block exists for any shape, then the descending chain is finite in length and $Z_n$ is the terminal subshift.
    
    In fact, the chain \textit{must} be finite in length. This is because $Z_{n+1} \subset Z_n$ implies $\P(S,Z_{n+1}) \subset \P(S,Z_n)$ for each $S \in \S$, and $\P(S_n,Z_{n+1}) \sqcup \{\beta_n\} \subset \P(S_n, Z_n)$ for each $n \geq 0$. Hence we have that \[
        \sum_{S\in\S} |\P(S,Z_n)|
    \] is a nonnegative integer sequence which strictly decreases with $n$, and therefore must terminate. Let $N \geq 0$ be the index of the terminal subshift, and note by construction that for each $S \in \S$ and each aligned block $b \in \P^a(S, Z_N)$, either $b$ is uniquely determined by $\partial^2 b$, or $b \in \mathcal{W}(S)$, or $b^Y \in \P(S,\tilde{Y})$.
    
    
    We note the following intermediate lemma which shall be referenced multiple times in the remainder of this proof.
    
    \begin{lemma}
    \label{lem:forbidden-block-removed}
        For each $n < N$ and $(y,t) \in Z_n$, there is a point $(y^*,t) \in Z_{n+1}$ satisfying $(y,t)(g) = (y^*,t)(g)$ for every $g \notin \bigcup_c \int t(c)c$, where the union is taken over all $c \in G$ with $\sigma^c(y,t) = \beta_n$.
    \end{lemma}
    
    \begin{proof}
        
        By property (B3), there is a block $b \in \P^a(S_n, Z_n)$ such that $b(\partial^2 S_n) = \beta_n(\partial^2 S_n)$. In words, we simply replace every appearance of $\beta_n$ in $(y,t)$ with $b$ to construct the point $(y^*,t)$.
        
        Precisely, let $(c_k)_k$ enumerate the group elements $c$ for which $\sigma^c(y,t) = \beta_n$, which is necessarily a subset of the elements $c$ for which $t(c) = S_n$ because $\beta_n$ is aligned. Let $y^* \in \A^G$ be given by $y^*(g) = b^Y(gc_k^{-1})$ whenever $g \in S_nc_k$, and $y^*(g) = y(g)$ for every $g \notin \bigcup_k S_nc_k$. This point is well defined because $t$ is a disjoint tiling. 
        
        Because $Y$ is an SFT of parameter $K$ and by Lemma~\ref{lem:sft-excision}, we see that $y^*$ is an allowed point of $Y$. By construction, for every $S \in \S$ and $c \in G$ with $t(c) = S$, either $\sigma^c(y,t)(S) = \sigma^c(y^*,t)(S)$ or $\sigma^c(y,t) = \beta_n$ and $\sigma^c(y^*, t) = b$. Consequently, no forbidden block $\beta_i$ for any $i < n$ can appear in $(y^*,t)$, else that would force an appearance of $\beta_i$ in $(y,t)$, where $\beta_i$ is already forbidden. Moreover, the block $\beta_n$ cannot appear in $(y^*,t)$ by construction. Thus, $(y^*,t) \in Z_{n+1}$ and we are done.

    \end{proof}
    
    Now continuing the proof of Theorem~\ref{thm:finding-the-good-target-Y0}, we claim that for each $n$, the subshift $\pi(Z_n)$ is strongly irreducible, separates elements of $G$, and satisfies $\tilde{Y} \subset \pi(Z_n) \subset Y$. Moreover, we claim that $h(\pi(Z_n)) - h(\pi(Z_{n+1})) < b-a$ for each $n < N$, and $h(\pi(Z_N)) - h(\tilde{Y}) < b-a$. 
    
    
    For $\tilde{Y} \subset \pi(Z_n) \subset Y$, let $\tilde{y} \in \tilde{Y}$ and $t \in T$ be arbitrary, in which case $(\tilde{y},t) \in Z_0$. Note that, for each $n < N$, the block $\beta_n$ cannot appear in $(\tilde{y},t)$, else that would imply that $\beta_n^Y$ appears in $\tilde{y}$, contradicting property (B1). Thus, $(\tilde{y},t) \in Z_n$ for each $n \leq N$. This demonstrates that $\tilde{Y} \times T \subset Z_n$, and in particular $\tilde{Y} \subset \pi(Z_n) \subset Y$, for each $n \leq N$.

    For the strong irreducibility, note that $Z_0 = Y \times T$ is strongly irreducible because both $Y$ and $T$ are strongly irreducible. For induction, suppose $Z_n$ is strongly irreducible of parameter $K_n \subset G$ (with $e \in K_n$) for a fixed $n < N$. Let $U = \bigcup_{S\in \S} S$. We claim that $Z_{n+1}$ is strongly irreducible of parameter $UU^{-1} K_n UU^{-1}$.
    
    
    
    Indeed, let $(y_1,t_1)$, $(y_2,t_2) \in Z_{n+1}$ be arbitrary points, and let $F_1$, $F_2 \subset G$ be arbitrary finite subsets with $K_nUU^{-1} F_1 \cap UU^{-1} F_2 = \varnothing$. As $Z_{n+1} \subset Z_n$ and $Z_n$ is strongly irreducible of parameter $K_n$, there is a point $(y,t) \in Z_n$ with $(y,t)(UU^{-1}F_1) = (y_1,t_1)(UU^{-1}F_1)$ and $(y,t)(UU^{-1}F_2) = (y_2,t_2)(UU^{-1}F_2)$. 
    
    Now consider the forbidden block $\beta_n$ of shape $S_n \in \S$. Suppose $\sigma^g(y,t)(S_n) = \beta_n$ for some $g \in G$ with $S_ng \cap F_1 \neq \varnothing$. It follows that $g \in S_n^{-1} F_1$, hence $S_ng \subset S_nS_n^{-1} \subset UU^{-1}F_1$, hence $\sigma^g(y_1,t_1)(S_n) = \sigma^g(y,t)(S_n) = \beta_n$, contradicting the fact that $(y_1,t_1) \in Z_{n+1}$ and $\beta_n$ is forbidden in $Z_{n+1}$. From this observation (and by an identical argument for $F_2$), we see that if $\beta_n$ appears anywhere in $(y_0,t)$ then it does \textit{not} appear on any tile of $t$ which intersects either $F_1$ or $F_2$. 
    
    Let $(y^*,t) \in Z_{n+1}$ be the point delivered by Lemma~\ref{lem:forbidden-block-removed} as applied to $(y,t)$. From the previous paragraph, we have $(y^*,t)(F_i) = (y,t)(F_i) = (y_i,t_i)(F_i)$ for each $i = 1,2$. We conclude that $Z_{n+1}$ is strongly irreducible of parameter $UU^{-1}K_n UU^{-1}$, therefore completing the induction. As strong irreducibility is preserved under taking factors, it follows that $\pi(Z_n)$ is strongly irreducible for each $n \leq N$.

    To see that each $\pi(Z_n)$ separates elements of $G$, let $n \leq N$ and $g \neq e$ be fixed. We proceed by cases on $g$.
    
    If $g \in S$ for some $S \in \S$, then there is a witness block $b \in \mathcal{W}(S)$ such that $b^Y(e) \neq b^Y(g)$. Choose and fix $(y_0, t) \in Z_0$ such that $(y_0,t)(S) = b$ and apply Lemma~\ref{lem:forbidden-block-removed} at most $N$ times to produce the point $(y_n,t) \in Z_n$. Note $(y_n,t)(S) = (y_0,t)(S) = b$, as $b \neq \beta_n$ for every $n < N$ by property (B2). Consequently, $y_n \in \pi(Z_n)$ satisfies $y_n(e) = b^Y(e) \neq b^Y(g) = y_n(g)$.
    
    If $g \notin S$ for any $S \in \S$, then pick $S_0 \in \S$ arbitrarily and fix $t \in T$ with $t(e) = S_0$. 
    Because $t$ is an exact tiling, there is a unique $c \neq e$ such that $g\in t(c)c$. Let $S_1 = t(c)$ and write $g = s_1c$ for some $s_1 \in S_1 \in \S$.
    Recall $0$, $1 \in \A$ are two distinguished symbols determined in the construction of $\mathcal{W}(S)$. By construction, there is a witness block $b_1 \in \mathcal{W}(S_1)$ with $b_1^Y(s_1) = 1$. Pick any $y \in Y$ with $\sigma^c(y)(S_1) = b_1$. By construction, there is a witness block $b_0 \in \mathcal{W}(S_0)$ with $b_0^Y(e) = 0$ and $\partial^2 b_0^Y = y(\partial^2 S_0)$.  Because $Y$ is an SFT of parameter $K$ and by property (S1) and Lemma~\ref{lem:sft-excision}, there is a point $y^* \in Y$ given by $y^*(S_0) = b_0^Y$ and $y^*(g) = y(g)$ otherwise. In particular, $\sigma^c(y^*)(S_1) = b_1^Y$. Then, apply Lemma~\ref{lem:forbidden-block-removed} at most $N$ times to the initial point $(y^*,t) \in Z_0$, thus yielding $(y_n, t) \in Z_n$. Observe that $(y_n,t)(S_0) = (y^*,t)(S_0) = b_0$ and $\sigma^c(y_n,t)(S_1) = \sigma^c(y^*,t)(S_1) = b_1$, because $b_0 \neq \beta_n \neq b_1$ for every $n < N$ by property (B2). Consequently, $y_n \in \pi(Z_n)$ satisfies $y_n(e) = 0 \neq 1 = y_n(g)$. This finishes all cases and demonstrates that $\pi(Z_n)$ separates elements of $G$ for each $n \leq N$.
    
    
    The proofs that $h(\pi(Z_n)) - h(\pi(Z_{n+1})) < b-a$ for every $n < N$ and $h(\pi(Z_N)) - h(\tilde{Y}) < b-a$ are nearly identical to arguments appearing in \cite[Theorem~4.1]{bland-mcgoff-pavlov}. We proceed quickly through the argument here. Choose a finite subset $F\subset G$ such that $|h(Z_n) - h(F,Z_n)| < \varepsilon$ for every $n\leq N$, $|h(F,T) - h(T)| < \varepsilon$ (this implies in particular that $h(F,T) < 2\varepsilon$ by choice of $T$), and $|F \setminus U^{-1}F| < \varepsilon|F|$, where $U = \bigcup_{S\in \S} S$. For each $n \leq N$, let \[
        P(n) = \sum_{t(F)} \prod_c |\P^a(\int t(c), Z_n)|
    \] where the sum is taken over all patterns $t(F) \in \P(F,T)$ and the product is taken over all $c\in C(t) \cap U^{-1}F \cap F$. For each $n \leq N$, it holds that \[
        P(n) \leq |\P(F,Z_n)| \leq |\A|^{2\varepsilon|F|}\cdot P(n).
    \] The first inequality follows from the strong irreducibility of $Y$ in combination with Lemma~\ref{lem:forbidden-block-removed}. The latter inequality follows from projecting a pattern $p\in \P(F,Z_n)$ to the interiors of the tiles described by $p^T = t(F) \in \P(F,T)$ which are contained in $F$. This projection determines $p$ up to the portion of $F$ not covered by those tile interiors, a subset of $F$ of size at most $2\varepsilon|F|$ by the combination of the fact that the tiling is exact, the assumed invariance condition on $F$, and property (S3).
    
    Moreover, for each $n < N$ it holds that $P(n) \leq 2^{\varepsilon|F|}\cdot P(n+1)$. This follows from the fact that $|\P^a(\int S,Z_n)| - |\P^a(\int S, Z_{n+1})| \leq 1$  (at most one aligned block is removed as one passes from $Z_n$ to $Z_{n+1}$), in combination with the assumed invariance condition on $F$.
    
    The above, in combination with the assumed entropy estimating properties of $F$, implies that \[
        h(Z_n) < h(Z_{n+1}) + 2\varepsilon + \varepsilon\log 2 + 2\varepsilon\log|\A|.
    \] This, together with the fact that $h(Z_n) < h(\pi(Z_n)) +\varepsilon$ for each $n \leq N$ and the choice of $\varepsilon$, finally gives that $h(\pi(Z_n)) - h(\pi(Z_{n+1}) < b-a$ holds for each $n < N$.
    
    
    Next, consider the terminal subshift $Z_N$. Recall that for every shape $S\in \S$, every aligned block $b\in \P^a(S,Z_N)$ either belongs to $\mathcal{W}(S)$, is uniquely determined by $\partial^2 b$, or satisfies $b^Y \in \P(S,\tilde{Y})$ by construction. Recall also that $|\mathcal{W}(S)| \leq e^{\varepsilon|S|} \cdot |\A|^{\varepsilon|S|}$. This, together with property (S4), imply that for every shape $S\in \S$ it holds that \[
        |\P^a(\int S, Z_N)| \leq e^{h(\tilde{Y})|S|}\cdot e^{2\varepsilon|S|} \cdot |\A|^{2\varepsilon|S|}.
    \] This, together with the facts that $h(F,T) < 2\varepsilon$ and $|\P(F,Z_N)| \leq |\A|^{2\varepsilon|F|}\cdot P(N)$ argued earlier, gives us that \[
        |\P(F,Z_N)| \leq e^{h(\tilde{Y})|F|}\cdot e^{4\varepsilon|F|} \cdot |\A|^{4\varepsilon|F|}
    \] in which case it follows that $h(Z_N) < h(\tilde{Y}) + 5\varepsilon + 4\varepsilon\log|\A|$. This and our choice of $\varepsilon$ finally give that $h(\pi(Z_N)) - h(\tilde{Y}) < b-a$.
    
    We have demonstrated that $h(\pi(Z_n)) - h(\pi(Z_{n+1})) < b-a$ for every $n < N$ and $h(\pi(Z_N)) - h(\tilde{Y}) < b-a$. As $\pi(Z_0) = Y$, there must therefore exist at least one $n\leq N$ for which $\pi(Z_n)$ satisfies $a < h(\pi(Z_n)) < b$, thus completing the proof.
    
    
    
\end{proof}

\subsection{Marker patterns}

Let $Y$ be a subshift over $G$. In this section, we construct \textit{marker patterns} for $Y$. Marker patterns are patterns $m \in \P(Y)$ for which if $m$ appears in some point $y \in Y$, then there is not another appearance of $m$ in $y$ except at possibly an arbitrarily large displacement. Ideally, distinct appearances of a marker pattern appear on disjoint regions of $G$. In practice, there is potentially some overlap. Marker patterns were constructed for $G = \mathbb{Z}^d$ by Lightwood \cite[Lemma~6.3]{lightwood} in the case that $Y$ is an SFT with the uniform filling property which contains a point with a finite orbit. Here we generalize the construction to the case that $G$ is an arbitrary countable amenable group and $Y$ is a strongly irreducible SFT which separates elements of $G$ and has positive entropy. In particular, we do \textit{not} invoke the comparison property in this construction. This construction, in combination with Theorem~\ref{thm:finding-the-good-target-Y0}, provides the marker patterns for $Y$ which we need in the proof of our main result.


We construct a marker pattern $m$ here by first constructing an aperiodic point $y\in Y$, then taking $m$ to be the pattern of a large-enough shape appearing in $y$ at $e$. We construct aperiodic points as follows: begin by passing to a subsystem $Y_1\subset Y$ such that $h(Y_1) < h(Y)$, in which case one may find arbitrarily many patterns which do not appear in $Y_1$ but which do appear in $Y$. By beginning with a point in $Y_1$ and mixing in one of these ``forbidden" patterns from $Y$ (via the strong irreducibility of $Y$), we obtain a point which lacks all but possibly finitely many periods. Then, we pass to a subsystem $Y_0 \subset Y_1$ (which is also assumed to be strongly irreducible and to separate elements of $G$) from which an auxiliary pattern can be found which lacks precisely that finite set of periods. Mixing this via $Y_0$ into our point from before yields the desired aperiodic point.

\begin{theorem}
\label{thm:the-marker-patterns}
    Let $Y_0 \subset Y_1 \subset Y$ be subshifts and suppose that $Y_0$ and $Y$ are strongly irreducible, $Y_0$ separates elements of $G$, and $h(Y_1) < h(Y)$. For any $r \in \mathbb{N}$, there exists a shape $M \subset G$ such that for any fixed $y_0 \in Y_0$, there are patterns $m_1, \ldots, m_r \in \P(M,Y) \setminus \P(M, Y_1)$ and points $y_1, \ldots, y_r \in Y$ satisfying \begin{enumerate}
        \item $m_i$ appears in $y_i$ only at $e$,
        \item $m_i$ does not appear in $y_j$ for any $j \neq i$, and
        \item $y_i(g) = y_0(g)$ for every $g \notin M$
    \end{enumerate} for each $i = 1, \ldots, r$.
\end{theorem}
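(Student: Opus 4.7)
Following the sketch in the paragraph preceding the theorem, for each fixed $y_0 \in Y_0$ the plan is to construct $r$ points $y_1, \ldots, y_r \in Y$ that all agree with $y_0$ outside a fixed finite shape $M$ and each carry a distinguishing $Y_1$-forbidden ``fingerprint'' pattern on a sub-shape $F_0 \subset M$; the marker $m_i$ will be $y_i(M)$. Crucially, $M$ is to depend only on $Y$, $Y_0$, $Y_1$, and $r$, not on $y_0$.

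\textbf{Setup.} Let $K \subset G$ be a symmetric common strong-irreducibility parameter for $Y_0$ and $Y$ containing $e$. The entropy gap $h(Y_1) < h(Y)$ gives, for sufficiently invariant $F_0$, that $|\P(F_0, Y) \setminus \P(F_0, Y_1)| \gtrsim e^{h(Y)|F_0|}$. Let $P_0 := F_0^{-1} K F_0 \setminus \{e\}$; for each $g \in P_0$, simple separation of $Y_0$ provides $w^{(g)} \in Y_0$ with $w^{(g)}(e) \neq w^{(g)}(g)$. Choose positions $h_g \in G$, one per $g \in P_0$, such that the ``witness clusters'' $K\{h_g, h_g g\}$ are pairwise disjoint and disjoint from $K F_0$. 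Set $R := F_0 \cup \bigcup_{g \in P_0}\{h_g, h_g g\}$ and $M := K R$.

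\textbf{Construction and easy cases.} Given $y_0$, I will pick $r$ distinct ``good'' forbidden patterns $q_1, \ldots, q_r$ (criterion below) and apply Lemma~\ref{lem:strong-irred-implies-patch-mixing} iteratively to obtain $y_i \in Y$ with $y_i(F_0) = q_i$, $y_i(h_g) = w^{(g)}(e)$, $y_i(h_g g) = w^{(g)}(g)$ for each $g \in P_0$, and $y_i(G \setminus KR) = y_0(G \setminus KR)$. Set $m_i := y_i(M)$; condition (3) is immediate and condition (1) follows because $m_i$ contains $q_i \notin \P(F_0, Y_1)$ on $F_0 \subset M$. For condition (2), suppose $m_i$ appears in $y_j$ at $g$, so $y_j(F_0 g) = q_i$, and split by how $F_0 g$ meets $KR$. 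If $F_0 g \cap KR = \varnothing$, then $y_j(F_0 g) = y_0(F_0 g) \in \P(F_0, Y_0) \subset \P(F_0, Y_1)$, contradicting $q_i \notin \P(F_0, Y_1)$. If $F_0 g \cap K F_0 \neq \varnothing$, then $g \in P_0 \cup \{e\}$: the subcase $g = e$ yields $q_j = q_i$, so $j = i$; the subcase $g \in P_0$ is ruled out by taking $f = h_g \in M$ to get $w^{(g)}(g) = w^{(g)}(e)$, absurd.

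\textbf{Main obstacle.} The remaining case is that $F_0 g$ meets a witness cluster $K\{h_{g'}, h_{g'} g'\}$ without meeting $K F_0$. Setting $F_0' := \{f \in F_0 : fg \notin KR\}$ (with $|F_0 \setminus F_0'| \leq 2|K|$), the constraints $y_j(fg) = q_i(f)$ for $f \in F_0'$, where $y_j = y_0$, force $q_i|_{F_0'} \in \P(F_0', Y_0) \subset \P(F_0', Y_1)$. I call $q_i$ \emph{good} if $q_i|_{F_0'} \notin \P(F_0', Y_1)$ for every such admissible $F_0'$; the number of bad forbidden patterns is bounded by a polynomial in $|F_0|$ times $|\P(F_0, Y_1)| \cdot |\A|^{2|K|} \lesssim e^{h(Y_1)|F_0|}$, whereas the total number of forbidden patterns is $\gtrsim e^{h(Y)|F_0|}$. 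Since $h(Y_1) < h(Y)$, good patterns dominate for $|F_0|$ sufficiently large, and $r$ of them can be selected. This counting argument --- the essential use of the entropy gap to yield robustly-forbidden fingerprints --- is the heart of the proof.
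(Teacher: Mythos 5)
Your overall architecture is close in spirit to the paper's: the entropy gap $h(Y_1)<h(Y)$ supplies $Y_1$-forbidden fingerprints, witness pairs of unequal symbols at spread-out positions kill appearances at displacements in $F_0^{-1}KF_0\setminus\{e\}$, strong irreducibility glues everything onto $y_0$, and far displacements are excluded because the observed window would be $Y_1$-allowed. The structural difference is that the paper first mixes the witness pairs into $y_0$ \emph{inside $Y_0$} (producing a point $y_0^{(N)}\in Y_0$) and only afterwards mixes the fingerprint in via $Y$ on $KF_0$; hence each $y_i$ coincides with a genuine point of $Y_0$ off $KF_0$, every window disjoint from $KF_0$ is automatically $Y_1$-allowed, and your ``main obstacle'' never arises. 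Because you glue the witnesses via $Y$, you create rings of uncontrolled $Y$-symbols around each cluster and must then run the counting argument over ``good'' fingerprints — and that argument has a genuine gap as written: the bound $|F_0\setminus F_0'|\le 2|K|$ does not follow from your choice of the $h_g$. You only require the clusters $K\{h_g,h_gg\}$ to be pairwise disjoint and disjoint from $KF_0$; nothing prevents many clusters (there are up to about $|K||F_0|^2$ of them) from lying inside a single translate $F_0g$, in which case $|F_0\setminus F_0'|$ can be of order $|F_0|$. Then your bad-pattern bound degrades to $\mathrm{poly}(|F_0|)\cdot|\P(F_0,Y_1)|\cdot|\A|^{c|F_0|}$, which can swamp $e^{(h(Y)-h(Y_1))|F_0|}$ whenever $\log|\A|\ge h(Y)-h(Y_1)$, so ``good patterns dominate'' is not justified.

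The repair is small but must be stated: choose the $h_g$ inductively so that each new cluster $K\{h_g,h_gg\}$ is disjoint from $F_0F_0^{-1}\,(KF_0\cup W)$, where $W$ is the union of the previously placed clusters; then any translate of $F_0$ meets at most one cluster and $|F_0\setminus F_0'|\le 2|K|$ holds, after which your counting goes through for sufficiently invariant $F_0$. (Alternatively, adopt the paper's two-stage mixing through $Y_0$, which deletes the exceptional case and the counting altogether.) A second, minor slip: prescribing $y_i(h_g)=w^{(g)}(e)$ and $y_i(h_gg)=w^{(g)}(g)$ is not, in a non-abelian group, a translate of the pattern $w^{(g)}(\{e,g\})$, so it need not be an allowed two-symbol pattern of $Y$; instead apply Definition~\ref{def:separate-elements} directly to the distinct pair $h_g\neq h_gg$ to obtain $w^{(g)}\in Y_0$ with $w^{(g)}(h_g)\neq w^{(g)}(h_gg)$ and prescribe those values — your subsequent argument uses only that the two prescribed symbols differ and are common to all the $y_i$.
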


\begin{proof}
    Because $h(Y_1) < h(Y)$, there exists a finite subset $F \subset G$ with $e \in F$ such that $|\P(F,Y) \setminus \P(F,Y_1)| > r$. Choose and fix $r$ distinct patterns $a_1, \ldots, a_r \in \P(F,Y) \setminus \P(F,Y_1)$.
    
    Choose a finite subset $K \subset G$ with $e \in K$ to witness the strong irreducibility of $Y$ and $Y_0$. Let $\{e, g_1, \ldots, g_N\}$ be an enumeration of $F^{-1}KF$. Let $M_0 = KF$ and suppose for induction that $M_{n-1} \subset G$ has been constructed for a fixed $n \in [1,N]$. Because $G$ is infinite, by the pigeonhole principle there exists an element $h_n \in G$ such that $Kh_n \cup Kh_ng_n$ is disjoint from $M_{n-1}$. Let $M_n = M_{n-1} \sqcup (Kh_n \cup Kh_ng_n)$. We claim that $M = M_N$ is the subset desired for the theorem. Note that \[
        M = KF \sqcup \Big(\bigsqcup_{n=1}^{N} K\{h_n, h_ng_n\}\Big)
    \] by induction.
    
    Let $y_0 \in Y_0$ be arbitrary. Let $y_0^{(0)} = y_0$ and suppose for induction that $y_0^{(n-1)} \in Y_0$ has been constructed for a fixed $n \in [1,N]$. Because $Y_0$ separates elements of $G$, there exists a point $y^* \in Y_0$ with $y^*(h_n) \neq y^*(h_ng_n)$. Because $Y_0$ is strongly irreducible of parameter $K$, we may find a point $y_0^{(n)} \in Y_0$ such that $y_0^{(n)}(\{h_n,h_ng_n\}) = y^*(\{h_n,h_ng_n\})$ and $y_0^{(n)}(g) = y_0^{(n-1)}(g)$ for every $g \notin K\{h_n,h_ng_n\}$.
    
    By induction, the point $y_0^{(N)} \in Y_0$ satisfies $y_0^{(N)}(h_n) \neq y_0^{(N)}(h_ng_n)$ for each $n = 1, \ldots, N$ and $y_0^{(N)}(g) = y_0(g)$ for every $g \notin \bigcup_{n=1}^N K\{h_n,h_ng_n\}$. 
    
    For each $i = 1, \ldots, r$, because $Y$ is strongly irreducible of parameter $K$, we may find a point $y_i \in Y$ such that $y_i(F) = a_i$ and $y_i(g) = y^{(N)}(g)$ for each $g \notin KF$. Hence by construction, $y_i(h_n) \neq y_i(h_ng_n)$ for each $n = 1, \ldots, N$, and $y_i(g) = y_0(g)$ for each $g \notin M$. Finally, let $m_i = y_i(M)$. We claim that these are the patterns and points desired for the theorem. See Figure~\ref{fig:good_markers} for an illustration of the construction.
    
    \begin{figure}[hbt]
        \centering
        \includegraphics[width=0.5\textwidth]{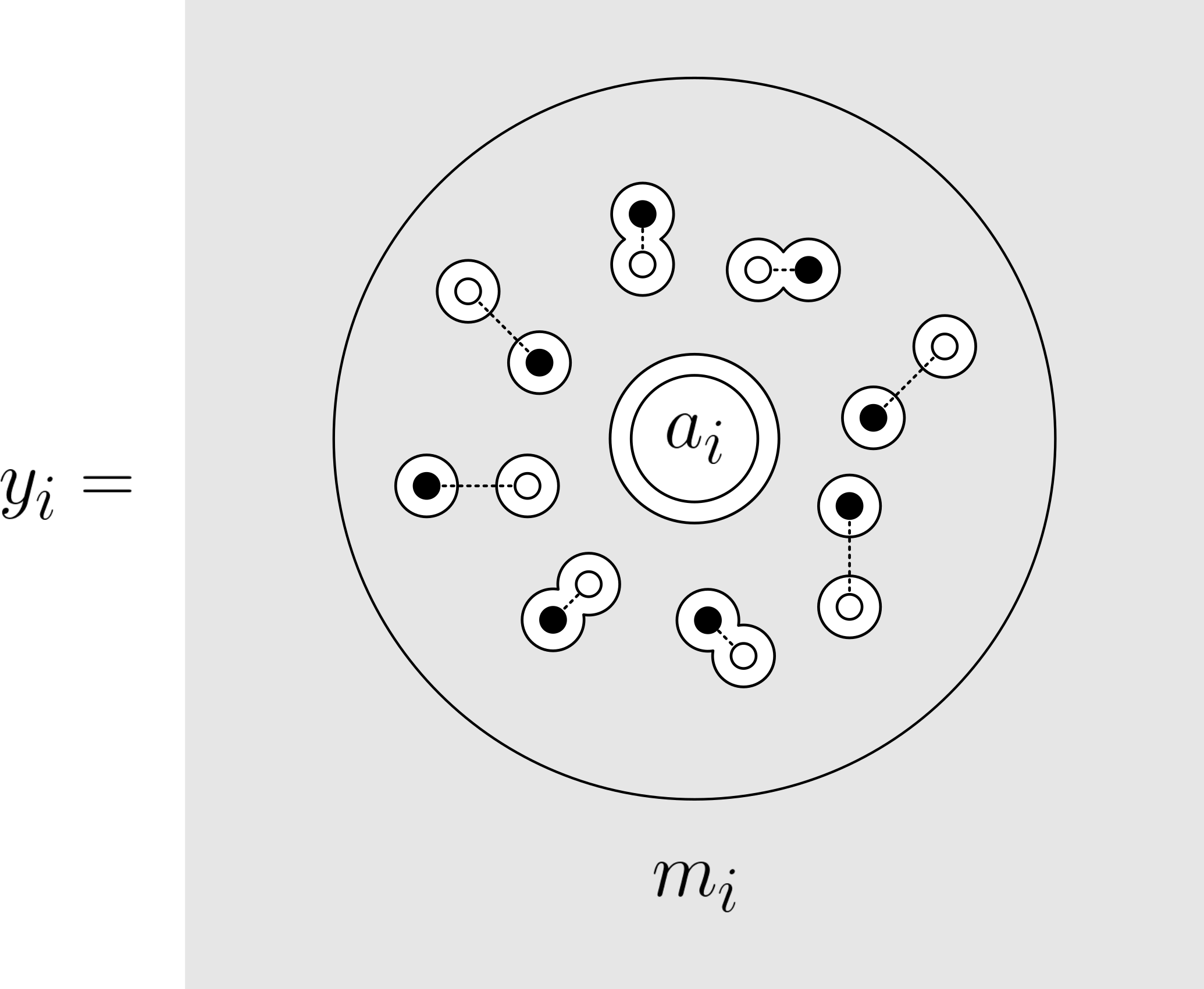}
        \caption{An illustration of the construction of the markers pattern $m_i$. The pattern $a_i$ is selected from $Y$ to be forbidden in $Y_1$. Thus, $y_i$ has no element of $G \setminus F^{-1}KF$ as a period. Then, small pairs of differing symbols are inductively mixed in (via $Y_0$) to prevent $y_i$ from having any period $g \in F^{-1}KF \setminus\{e\}$. The shaded exterior is the base point $y_0 \in Y_0$.}
        \label{fig:good_markers}
    \end{figure}

    For property (1), let $i \leq r$ be fixed. Observe that there is no $g \in F^{-1}KF\setminus\{e\}$ such that $\sigma^g(y_i)(M) = y_i(M)$, as otherwise would contradict the fact that every $g \in F^{-1}KF$ has a corresponding $h \in M \cap Mg^{-1}$ such that $y_i(h) \neq y_i(hg)$. 
    Moreover, if $g \notin F^{-1}KF$ then $Fg$ is disjoint from $KF$, in which case $\sigma^g(y_i)(F) = \sigma^g(y_0^{(N)})(F)$. We must therefore have $\sigma^g(y_i)(F) \neq a_i$, as otherwise would contradict the fact that $a_i$ is forbidden in $Y_1$ and hence also forbidden in $Y_0$. Consequently, $\sigma^g(y_i)(M) = y_i(M)$ only when $g = e$.
    
    For property (2), let $i \neq j$ be fixed. Observe that $a_j \neq a_i$ implies that $y_j(M) \neq y_i(M)$. Moreover, $\sigma^g(y_j)(M) \neq y_i(M)$ for any $g \neq e$ by an identical argument as in the previous paragraph. This is because $y_j(g) = y_0^{(N)}(g) = y_i(g)$ for every $g \notin KF$, and both $a_i$ and $a_j$ are forbidden in $Y_1$, and hence also forbidden in $Y_0$.
    
    Property (3) is true by construction.
\end{proof}

One may wonder why we mention the subshift $Y_1$ at all, instead of merely assuming $h(Y_0) < h(Y)$ directly and stating that the marker patterns are forbidden in $Y_0$. In fact, later on we shall have some additional structure on $Y_1$ which shall prove useful for our construction. Namely, in the chain of inclusions $Y_0 \subset Y_1 \subset Y$, we shall have that $Y_0$ is strongly irreducible, $Y_1$ is an SFT, and $Y$ is a strongly irreducible SFT. Then, having the marker patterns forbidden in not only $Y_0$ but also in $Y_1$ shall be significant.

\subsection{Main result}

We are now ready to present our main result. First we briefly outline the proof to come. 

Beginning with a strongly aperiodic subshift $X$, we derive a chain of factors of $X$ in the form \[
    X \xrightarrow{\T} T_0 \xrightarrow{\ret} T_1 \xrightarrow{\ex} T_2
\] where each $T_i$ for $i=0,1,2$ is a system of quasi-tilings of $G$. The foremost system $T_0$ is one in which the number of shapes is controlled, delivered by Theorem~\ref{thm:dynamical-quasitilings}. The system $T_1$ (consisting of retractions of points of $T_0$) is one in which each quasi-tiling exhibits large, disjoint tiles which nearly cover $G$. The lattermost system $T_2$ is a system of exact tilings, and we invoke the comparison property to construct it. Our construction of $T_2$ adapts part of the proof of \cite[Theorem~6.3]{downarowicz-zhang-comparison} (proof also appears in \cite[Theorem~7.5]{downarowicz-zhang-symb-ext}). We include the full construction here for the sake of completeness, and because we leverage our control over the foremost system $T_0$, which is not explicitly referenced in the constructions found in \cite{downarowicz-zhang-symb-ext, downarowicz-zhang-comparison}.

In principle, we wish to embed all the information of an arbitrary point $x\in X$ into a point $y\in Y$ in such a way that it can be uniquely (and ``locally") decoded. From $x$, we derive quasi-tilings $t_0$, $t_1$, and $t_2$ as above. On the side of $X$, the fact that $t_2$ is exact (and therefore \textit{covers} $G$) allows us to partition all of the information in $x$ into local ``blocks" of a bounded size. The fact that $t_2$ covers $G$ is especially significant for our construction; it would be fine if $t_2$ was not perfectly disjoint, so long as we had full covering and only a small (controllable) amount of ``redundancy". Then, by choosing our quasi-tilings with shapes sufficiently large, we may exhibit an injective map from patterns of shapes from $T_2$ in $X$ to patterns of shapes from $T_1$ in $Y$ (we describe this as a ``block injection"). Here we invoke the hypothesis $h(X) < h(Y)$, as well as the fact that each shape from $T_2$ is only slightly (controllably) larger in cardinality than a shape from $T_1$, as careful estimation shows.
We thereby construct a point of $Y$ by taking blocks from $x$, passing them through block injections to obtain blocks of $Y$, and gluing them together within $Y$ via a mixing condition. On this side, we see that the fact that $t_1$ is \textit{disjoint} is essential, so that there are no conflicts in laying these blocks together within $Y$. It is permissible for $t_1$ to not completely cover $G$, which is indeed the situation we grapple with in the coming proof.

That leaves open the issue of how to decode the point $x$ if only given $y$. If one knew which tilings $t_1$ and $t_2$ were used to construct $y$ from $x$, then it would be easy: simply look at the patterns appearing in $y$ on each of the tiles of $t_1$, pass these backwards through the block injections described above, then lay these new blocks upon the tiles of $t_2$ and thereby reconstruct $x$. Here we utilize the quasi-tiling $t_0$, which has a controlled number of shapes and tile centers spread arbitrarily sparsely throughout the group. This forces the ``information density" of $t_0$ to be arbitrarily low, hence we are able to encode $t_0$ within $y$ (with the use of marker patterns) by giving up only a subset of symbols of controllably small density. 

Given $y$, one is therefore able to decode the point $t_0$ by looking at the marker patterns, thereby deriving both $t_1$ and $t_2$, thereby decoding $x$ by the algorithm outlined above.



\begin{theorem}
\label{thm:main-result}
    Let $G$ be a countable amenable group with the comparison property. Let $X$ be a nonempty strongly aperiodic subshift over $G$. Let $Y$ be a strongly irreducible SFT over $G$ with no global period. 
    If $h(X) < h(Y)$ and $Y$ contains at least one factor of $X$, then $X$ embeds into $Y$.
\end{theorem}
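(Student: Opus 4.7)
The plan is to encode each $x \in X$ as a point $\psi(x) \in Y$ that is locally decodable. Because the property of locally separating elements is invariant under conjugacy, I would first pass to a higher block presentation and assume $Y$ simply separates elements of $G$. I would then use $h(X) < h(Y)$ together with the factor map $\phi : X \to Y$ to produce a chain $Y_0 \subset Y_1 \subset Y$ with $Y_0$ strongly irreducible and simply separating elements of $G$, $Y_1$ an SFT containing $Y_0$ and $\phi(X)$, $h(X) < h(Y_0)$, and $h(Y_1) < h(Y)$. The subshift $Y_0$ is supplied by Theorem~\ref{thm:finding-the-good-target-Y0} with a suitably small interval $(a,b)$ placing $h(Y_0)$ just above $h(X)$; the intermediate SFT $Y_1$ is then found between $Y_0$ and $Y$ by forbidding from $Y$ enough patterns absent from $Y_0$. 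Feeding the chain $Y_0 \subset Y_1 \subset Y$ into Theorem~\ref{thm:the-marker-patterns} yields marker patterns $m_1, \ldots, m_r \in \P(M, Y) \setminus \P(M, Y_1)$ attached to a base point $y_\star \in Y_0$.

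Next I would construct the factor chain $X \xrightarrow{\T} T_0 \xrightarrow{\ret} T_1 \xrightarrow{\ex} T_2$. Theorem~\ref{thm:dynamical-quasitilings} supplies $T_0$ with exactly $r$ F\o lner shapes and with $C(t)$ that is $L$-separated for a large finite $L$; the continuous retraction yields a disjoint, $(1-\varepsilon)$-covering system $T_1$ (by Lemma~\ref{lem:covering-of-retract}), and Theorem~\ref{thm:quasi-to-exact-tiling-factor} then promotes $T_1$ to a system $T_2$ of exact tilings whose tiles strictly contain those of $T_1$. The parameters are chosen in order: first the entropy gap $\eta = h(Y_0) - h(X) > 0$; then $\varepsilon$ and $r$; then $L$ large enough that, by Lemma~\ref{lem:approx-dens-of-L-disjoint-subs}, the density of marker sites $MC(t)$ is much smaller than $\eta$; finally the shapes are required to be sufficiently invariant with respect to $M$, $L$, and the SFT and strong-irreducibility witnesses of $Y$.

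The embedding $\psi$ is then assembled from three pieces. On each $T_1$-tile (with marker sites removed) I place a $Y_0$-pattern produced by a block-coded injection from the $X$-pattern on the corresponding $T_2$-tile; because $h(X) < h(Y_0)$ with uniform gap $\eta$ such injections exist once the shapes are sufficiently invariant, and the comparison property (Theorem~\ref{thm:comparison-injection}) upgrades them to continuous, shift-equivariant block codes indexed by $T_2$. Outside the $T_1$-tiles and on a thin collar around each tile I copy the factor image $\phi(x) \in \phi(X) \subset Y_1$. At each center $c \in C(\T(x))$ I overwrite $Mc$ with the marker $m_i$ whose index records the shape $\T(x)(c) = S_i$. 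Lemma~\ref{lem:sft-excision} together with strong irreducibility of $Y$ ensures the pieces assemble into a genuine point of $Y$; every ingredient is a block code, so $\psi$ is continuous and shift-commuting.

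Injectivity is verified by decoding: because $m_i \notin \P(M, Y_1)$ while the non-marker portion of $\psi(x)$ lies in $Y_1$ by construction, every appearance of $m_i$ in $\psi(x)$ is a genuine marker, so $\T(x)$ is recovered, hence $T_1$ and $T_2$; inverting the block injection on each $T_1$-tile then recovers the $X$-pattern on the corresponding $T_2$-tile, and exactness of $T_2$ delivers $x$. The main obstacle I anticipate is the joint parameter bookkeeping: the single entropy budget $\eta$ must simultaneously absorb the sacrificial density of the markers, the collar overhead around each tile (needed for strong-irreducible gluing), the $\varepsilon$-fraction of each $T_2$-tile not covered by its $T_1$-predecessor (which has to be filled using $\phi(x)$ instead), and the delicate requirement that the non-marker region of $\psi(x)$ genuinely lies in $Y_1$, so that markers cannot appear by accident at boundaries between the pieces.
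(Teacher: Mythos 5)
Your proposal follows essentially the same route as the paper: pass to a simply-separating presentation, build the chain $Y_0 \subset Y_1 \subset Y$ via Theorem~\ref{thm:finding-the-good-target-Y0} plus an intermediate SFT, take markers from Theorem~\ref{thm:the-marker-patterns} over a base point of $Y_0$, derive the tiling chain $T_0 \to T_1 \to T_2$ from $X$, inject $X$-patterns on $T_2$-tiles into $Y_0$-patterns on punctured $T_1$-tiles, glue collars by strong irreducibility, overwrite markers at the centers, and decode using the fact that the markers are forbidden in $Y_1$.

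There is, however, one step whose justification is missing as written. You produce $T_2$ by citing Theorem~\ref{thm:quasi-to-exact-tiling-factor} as a black box, and your bookkeeping then assumes that each $T_2$-tile exceeds its $T_1$-predecessor by only an $\varepsilon$-fraction. The statement of that theorem gives $t_1(c)\subset \ex(t_1)(c)$ and invariance of the new shapes, but no per-tile bound on $|\ex(t_1)(c)\setminus t_1(c)|$; without such a bound the counting comparison $|\P(S_2,X)| < |\P(S_1^*,Y_0)|$ of Lemma~\ref{lem:block-injection} can fail even though $h(X)<h(Y_0)$, since a tile that grows by a definite proportion makes $e^{(h(X)+\varepsilon)|S_2|}$ overwhelm $e^{(h(Y_0)-\varepsilon)|S_1^*|}$. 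This is exactly why the paper does not invoke Theorem~\ref{thm:quasi-to-exact-tiling-factor} in the main proof but reruns its construction via Theorem~\ref{thm:comparison-injection}: it reserves in each $T_1$-tile a set $B(S_1)$ with $|B(S_1)|<3\varepsilon|S_1|$ and enlarges the tile only by the comparison-injection preimage of $B(t_1(c))c$, yielding $|t_2(c)\setminus t_1(c)|<3\varepsilon|t_1(c)|$. Relatedly, Theorem~\ref{thm:comparison-injection} is not what makes the pattern injections block codes: there are finitely many shape tuples, so one fixes one injection $\P(S_2,X)\to\P(S_1^*,Y_0)$ per tuple and locality is automatic once the tilings are block-coded from $x$; the comparison property enters only through the construction of exact tilings. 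Finally, your decoding step needs, in addition to ``the non-marker portion lies in $Y_1$,'' the properties (1)--(2) of Theorem~\ref{thm:the-marker-patterns} together with the placement of the marker substrate $y^{(m)}$ around each center, to exclude spurious occurrences of $m_i$ whose window overlaps a genuine marker; you flag this issue yourself but do not resolve it.
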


\begin{proof}
    Let $\A_X$ and $\A_Y$ be finite alphabets such that $X\subset \A_X^G$ and $Y\subset \A_Y^G$. Suppose $\phi : X \to Y$ is a homomorphism, not necessarily injective, in which case $\tilde{Y} = \phi(X) \subset Y$ is a factor of $X$. Note that $h(\tilde{Y}) \leq h(X) < h(Y)$. 
    Without loss of generality (by Lemma~\ref{lem:global-period-to-sep-elems}), we may assume that $Y$ separates elements of $G$ (Definition~\ref{def:separate-elements}). In that case, by Theorem~\ref{thm:finding-the-good-target-Y0} there exists a strongly irreducible subshift $Y_0$ which separates elements of $G$ and satisfies $\tilde{Y} \subset Y_0 \subset Y$ and $h(X) < h(Y_0) < h(Y)$. By \cite[Theorem~4.2]{bland-mcgoff-pavlov}, there exists an SFT $Y_1$ such that $Y_0 \subset Y_1 \subset Y$ and $h(Y_1) < h(Y)$. It is significant for our proof that these inequalities are strict.
    
    Choose a finite subset $K \subset G$ with $e\in K$ such that $K^{-1} = K$, $K$ witnesses $Y$ and $Y_1$ as SFTs, and $K$ witnesses $Y$ and $Y_0$ as strongly irreducible. 
    As in the proof of Theorem~\ref{thm:finding-the-good-target-Y0}, we shall abbreviate $\int^n F = \int_{K^n} F$ and $\partial^n F = \partial_{K^n} F = F \setminus \int^n F$ for each natural $n \in \mathbb{N}$ and finite subset $F \subset G$ for the remainder of this proof. We shall also abbreviate $\partial^n p = p(\partial^n F)$ for each pattern $p$ of shape $F$.
    
    Choose $\varepsilon > 0$ such that $\varepsilon < 1/3$ and \[
        \varepsilon < \frac{h(Y_0) - h(X)}{2  + 5\log|\A_X| + (5+4|K|^6)\log |\A_Y|}.
    \] Choose $r = 1 + \lceil(2/\varepsilon)\log(1/\varepsilon)\rceil$, in which case $(1-\varepsilon/2)^r < \varepsilon$. Let $M \subset G$ be the subset delivered by Theorem~\ref{thm:the-marker-patterns} for $Y_0$, $Y_1$, $Y$, and $r$ as chosen here. By passing to a superset of $M$ if necessary, we may assume that $K \subset M$ and that $M^{-1} = M$. Choose a finite subset $L \subset G$ such that $M^6 \subset L$ and $|M^6|/|L| < \varepsilon$.
    
    Let $T_0 \subset \Lambda(\S_0)^G$ be the quasi-tiling system delivered by Theorem~\ref{thm:dynamical-quasitilings} for $X$, $\varepsilon$, $r$, and $L$ as chosen here, with $n_0$ chosen so that every $S_0 \in \S_0$ satisfies the following conditions. \begin{enumerate}[(S1)]
        \item $1/|S_0| < \varepsilon(1-\varepsilon)$.
        \item $\big(|K^3||M^6||L|\big)|LS_0 \setminus S_0| < \varepsilon|S_0|$.
        \item $h(S_0, X) < h(X) + \varepsilon$ and $h(S_0, Y_0) > h(Y_0) - \varepsilon$.
    \end{enumerate}
    
    Note that Theorem~\ref{thm:dynamical-quasitilings} gives that $|\S_0| \leq r$, so choose and fix an enumeration $(S_0^{(i)})_{i}$ of $\S_0$ where $i = 1, \ldots, r$. Theorem~\ref{thm:dynamical-quasitilings} also gives that every quasi-tiling $t_0 \in T_0$ is $\varepsilon$-disjoint as witnessed by a continuous and shift-commuting retraction map $t_0 \mapsto \ret(t_0)$. Let $\S_1$ be the set of all shapes realized as tiles of $\ret(t_0)$ for each $t_0 \in T_0$. Each shape $S_0 \in \S_0$ gives rise to a subcollection of shapes $S_1 \in \S_1$, all of which satisfy $S_1 \subset S_0$ and $|S_0 \setminus S_1| < \varepsilon|S_0|$. 
    Let $T_1 = \ret(T_0) \subset \Lambda(\S_1)^G$ be the system of all quasi-tilings obtained by taking retractions of the quasi-tilings in $T_0$, in which case the map $\ret$ is a factor map from $T_0$ to $T_1$. Note that every $t_1 \in T_1$ is disjoint by Theorem~\ref{thm:dynamical-quasitilings}. Moverover, every $t_1 \in T_1$ is $(1-\varepsilon)$-covering, since the retraction map {$\ret$} has the property that $\bigcup_g t_0(g)g = \bigcup_g \ret(t_0)(g)g$ for each $t_0 \in T_0$, and every $t_0 \in T_0$ is $(1-\varepsilon)$-covering by Theorem~\ref{thm:dynamical-quasitilings}.
    
    We aim to go one step further and construct a factor $T_2$ of $T_1$, which shall be a system of exact tilings. It is in this step that we appeal to the comparison property of $G$. To begin, note that for each pair of shapes $S_0 \in \S_0$ and $S_1 \in \S_1$ with $S_1 \subset S_0$ and $|S_0 \setminus S_1| < \varepsilon|S_0|$, it holds that \[
        |S_1| = |S_0| - |S_0 \setminus S_1| > (1-\varepsilon)|S_0| > 1/\varepsilon
    \] where above we have used property (S1). This implies that there exists an integer in the interval $[2\varepsilon|S_1|, 3\varepsilon|S_1|)$. Therefore, for each shape $S_1 \in \S_1$, we may find and fix an arbitrary subset $B(S_1) \subset S_1$ such that $2\varepsilon|S_1| \leq |B(S_1)| < 3\varepsilon|S_1|$. 
    
    To each $t_1 \in T_1$, we assign two disjoint subsets $A_{t_1}$, $B_{t_1}$ of $G$ in the following way. Let $A_{t_1} = G \setminus \bigcup_g {t_1}(g)g$ and let $B_{t_1} = \bigcup_g B({t_1}(g))g$. Observe that the assignments ${t_1} \mapsto A_{t_1}$, $B_{t_1}$ are continuous and shift-commuting. Observe also that $\overline{D}(A_{t_1}) \leq \varepsilon$ because ${t_1}$ is $(1-\varepsilon)$-covering, and that $\underline{D}(B_{t_1}) \geq 2\varepsilon(1-\varepsilon)$ by Lemma~\ref{lem:covering-of-retract} (with $\rho_0 = 1-\varepsilon$ and $\rho_1 = 2\varepsilon$). Consequently, \[
        \underline{D}(B_{t_1}) - \overline{D}(A_{t_1}) \geq 2\varepsilon(1-\varepsilon) - \varepsilon > 0
    \] where above we have used the fact that $\varepsilon \in (0,1/2)$. 
    
    Therefore, by Theorem~\ref{thm:comparison-injection}, there exists a family of injections $\phi_{t_1} : A_{t_1} \to B_{t_1}$ which is induced by a block code, in the sense that there is a finite subset $F \subset G$ and a function $\Phi : \P(F, T_1) \to F$ such that for every $t_1\in T_1$ and $g\in A_{t_1}$, it holds that \[
        \phi_{t_1}(g) = \Phi(\sigma^g({t_1})(F))g.
    \]
    
    With this, we are ready to construct $T_2$. For each $t_1 \in T_1$, let $t_2 = \ex(t_1)$ be the quasi-tiling obtained according to the rule \[
        t_2(c) = t_1(c) \sqcup \phi_{t_1}^{-1}(B(t_1(c))c)c^{-1} 
    \] for every $c \in C(t_1)$, and $t_2(g) = \varnothing$ otherwise. In words, we expand each tile $t_1(c)c$ by including all group elements which map into $B(t_1(c))c \subset B_{t_1}$ under the injective map $\phi_{t_1}$. Consequently, $|t_2(c) \setminus t_1(c)| \leq |B(t_1(c))| < 3\varepsilon|t_1(c)|$ for every $c \in C(t_2) = C(t_1)$.
    
    
    Let $\S_2$ be the set of all shapes realized as tiles of $\ex(t_1)$ for each $t_1 \in T_1$. There are at most finitely many because $\ex(t_1)(g) \subset F^{-1}t_1(g)$ for every $g\in G$. Each shape $S_1 \in \S_1$ gives rise to a subcollection of shapes $S_2 \in \S_2$, all of which satisfy $S_1 \subset S_2$ and $|S_2 \setminus S_1|< 3\varepsilon|S_1|$. 
    
    Let $T_2 = \ex(T_1) \subset \Lambda(\S_2)^G$. We claim that the map $\ex$ is a homomorphism and that every $t_2 \in T_2$ is an exact tiling of $G$. To see that the map $\ex$ is continuous and shift-commuting, let $t_1 \in T_1$ be arbitrary and let $t_2 = \ex(t_1)$. We note that for each $g \in G$ and $c \in C(t_2) = C(t_1)$, the definition given above implies that $g\in t_2(c)$ if and only if \[
        g\in t_1(c) \text{ or } \Phi(\sigma^{gc}(t_1)(F))g \in B(t_1(c)).
    \] Moreover, for each $f \in F$ we have $\sigma^{gc}(t_1)(f) = t_1(fgc) = \sigma^c(t_1)(fg)$. Because $t_2(c) \subset F^{-1}t_1(c)$, we see that $t_2(c)$ depends only on $\sigma^c(t_1)(FF^{-1}U_1)$, where $U_1 = \bigcup_{S_1\in \S_1} S_1$. This demonstrates that the map $\ex$ is induced by a block code, which is sufficient to demonstrate that $\ex$ is continuous and shift-commuting.
    
    For the claimed exactness, let $t_2 \in T_2$ and $g\in G$ be arbitrary. Choose $t_1 \in T_1$ such that $t_2 = \ex(t_1)$. If there exists a $c \in C(t_1)$ such that $g\in t_1(c)c$, then the $c$ is necessarily unique by the disjointness of $t_1$ and also it follows that $g \in t_2(c)c$. Otherwise, $g\in A_{t_1}$, in which case $\phi_{t_1}(g) \in B_{t_1} = \bigcup_c B(t_1(c))c$. Therefore there exists a $c \in C(t_1)$ such that $\phi_{t_1}(g)\in B(t_1(c))c \subset t_1(c)c$. The $c$ must again be unique by the disjointness of $t_1$. Then $g\in \phi_{t_1}^{-1}(B(t_1(c)c)) \subset t_2(c)c$. We see that for each $g\in G$ there exists a unique $c \in C(t_2)$ such that $g\in t_2(c)c$, hence $t_2$ is an exact tiling of $G$.
    
    There is one last quasi-tiling system we shall need. Let $\S_1^*$ denote the collection of all shapes obtained in the form $\int^3(S_1 \setminus M^6C)$, where $C$ is an arbitrary $L$-separated subset of $G$ and $S_1 \in \S_1$.  Each shape $S_1 \in \S_1$ gives rise to a subcollection of shapes $S_1^* \in \S_1^*$, all of which satisfy $S_1^* \subset S_1$. 
    For each $t_1 \in T_1$, let $t_1^*$ be the retraction of $t_1$ such that, for each $c\in C(t_1)$, it holds that \[
        t_1^*(c) = \int^3(t_1(c) \setminus M^6C(t_1)c^{-1})
    \] and $t_1^*(g) = \varnothing$ otherwise. Observe that each such quasi-tiling belongs to $\Lambda(S_1^*)^G$. Let $T_1^* = \{t_1^* \in \Lambda(\S_1^*)^G : t_1 \in T_1\}$. It is quick to see that the map $t_1 \mapsto t_1^*$ is a factor map from $T_1$ to $T_1^*$, because the assignment $t_1 \mapsto C(t_1)$ is continuous and shift-commuting. Moreover, $C(t_1^*) = C(t_2) = C(t_1) = C(t_0)$, thus all of these subsets are $L$-separated.
    
    With our quasi-tiling systems constructed, we now aim to construct an injection which will carry patterns on tiles in $X$ to patterns on tiles in $Y_0$. On the side of $X$, we use patterns of those shapes belonging to $\S_2$. On the side of $Y_0$, we shall use patterns of those shapes belonging to $\S_1^*$. We separate out the construction (and its rather involved estimations) into the following lemma. 
    
    \begin{lemma}
    \label{lem:block-injection}
        Let $(S_0, S_1, S_2, S_1^*)$ be any tuple of shapes from $\S_0$, $\S_1$, $\S_2$, and $\S_1^*$ respectively such that $S_1^* \subset S_1 \subset S_0 \cap S_2$, $|S_0 \setminus S_1| < \varepsilon|S_0|$, $|S_2 \setminus S_1| < 3\varepsilon|S_1|$, and $S_1^* = \int^3(S_1 \setminus M^6C)$ for some $L$-separated subset $C\subset G$. Then \[
            |\P(S_2, X)| < |\P(S_1^*, Y_0)|.
        \]
    \end{lemma}
    
    \begin{proof}
        We begin on the side of $X$. By the hypotheses, a quick calculation shows that $|S_0 \triangle S_2| < 5\varepsilon|S_0|$. It therefore holds that \begin{equation}\tag{X1}
        \label{ineq:upper-bound-on-X}
            |\P(S_2, X)| \leq |\P(S_0, X)| \cdot |\A_X|^{|S_0 \triangle S_2|} < e^{(h(X)+\varepsilon)|S_0|} \cdot |\A_X|^{5\varepsilon|S_0|}
        \end{equation} where above we have used condition (S3) in the latter inequality.
        
        Now we shall estimate the number of patterns in $Y_0$ of shape $S_1^*$. We begin by estimating the size of $S_1^*$ in terms of the size of $S_0$. We proceed in stages, beginning with $S_1$. We first note that \begin{align*}
            |S_1 \cap M^6C| & \leq |S_0 \cap M^6C|\\
            &\leq \frac{|M^6|}{|L|}|S_0| + |M^6||\partial_L S_0| + |M^6||(M^6)^{-1}S_0 \setminus S_0|\\
            &< \varepsilon|S_0| + \varepsilon|S_0| + \varepsilon|S_0|\\
            & = 3\varepsilon|S_0|
        \end{align*} where above we have used the fact that $S_1 \subset S_0$, Lemma~\ref{lem:approx-dens-of-L-disjoint-subs}, the choice of $L$, Lemma~\ref{lem:bdry-size-bound-for-invariance}, and the assumed condition (S2) in conjunction with the fact that $(M^6)^{-1} = M^6 \subset L$. It follows that \[
            |S_1 \setminus M^6C| = |S_1| - |S_1 \cap M^6C| \geq (1-\varepsilon)|S_0| - 3\varepsilon|S_0| = (1-4\varepsilon)|S_0|.
        \]
        
        This, together with the fact that $S_1 \setminus M^6C \subset S_1 \subset S_0$, gives us \begin{align*}
            |\partial^3(S_1 \setminus M^6C)| &\leq |K^3||K^3(S_1 \setminus M^6C) \setminus (S_1\setminus M^6C)|\\
            &\leq |K^3||K^3S_0 \setminus S_0| + |K^3|^2|S_0 \setminus(S_1 \setminus M^6C)|\\
            &< \varepsilon|S_0| + |K|^6 \cdot 4\varepsilon|S_0|\\
            &= \varepsilon(1 + 4|K|^6)|S_0|
        \end{align*} where above we have used Lemma~\ref{lem:bdry-size-bound-for-invariance}, Lemma~\ref{lem:transferring-invariance-conditions}, the assumed condition (S2) in conjuction with the fact that $K^3 \subset L$, and the calculation of the previous display. 
        
        The combination of the previous two displays gives us \begin{align*}
            |\int^3(S_1 \setminus M^6C)| &= |S_1 \setminus M^6C| - |\partial^3(S_1 \setminus M^6C)| \\
            & \geq (1-4\varepsilon)|S_0| - \varepsilon(1+4|K|^6)|S_0| \\
            &= |S_0| - \varepsilon(5+4|K|^6)|S_0|.
        \end{align*}
        
        Recall that $S_1^* = \int^3(S_1 \setminus M^6C)$ and note that $|\P(S_0, Y_0)| \leq |\P(S_1^*, Y_0)| \cdot |\A_Y|^{|S_0 \setminus S_1^*|}$. This, in combination with the previous display, gives that \begin{align*}
            |\P(S_1^*, Y_0)| &\geq |\P(S_0, Y_0)| \cdot |\A_Y|^{-|S_0 \setminus S_1^*|}\\
            &> e^{(h(Y_0) - \varepsilon)|S_0|} \cdot |\A_Y|^{-\varepsilon(5+4|K|^6)|S_0|}
        \end{align*} where above we have used the assumed condition (S3). Our choice of $\varepsilon$ gives us \[
            h(X) + \varepsilon + 5\varepsilon\log|\A_X| < h(Y_0) - \varepsilon - \varepsilon(5+4|K|^6)\log|\A_Y|
        \] in which case the previous calculation, in combination with (\ref{ineq:upper-bound-on-X}), finally gives that \[
            |\P(S_2, X)| < |\P(S_1^*, Y_0)|.
        \]
    \end{proof}
    
    The previous lemma implies that, for each tuple $(S_0, S_1, S_2, S_1^*)$ satisfying the hypotheses of the previous lemma, there exists an injective map \[
        \Psi(S_2, S_1^*) : \P(S_2, X) \to \P(S_1^*, Y_0).
    \] Let these injections be chosen and fixed. We now continue the proof of Theorem~\ref{thm:main-result}.
    
    Here we choose our marker patterns. Pick a point $y^{(m)} \in Y_0$ arbitrarily to serve as the ``substrate" of the marker patterns. Let $y_1,\ldots, y_r \in Y$ and $m_i = y_i(M) \in \P(M,Y)\setminus \P(M,Y_1)$ be the points and patterns delivered by Theorem~\ref{thm:the-marker-patterns} for the choice of substrate $y^{(m)}$.
    
    Now we appeal to the strong irreducibility of $Y_0$ in order to construct certain ``mixing" patterns. We do this twice: once for patterns of shape $M^6$, and once for patterns of each shape $S_1^*\in \S_1^*$.
    
    Note that $K^2\cdot KM^3 \subset M^6$ because $K\subset M$, thus $KM^3 \subset\int^2(M^6)$ and thus $KM^3$ is disjoint from $\partial^2M^6$. The subshift $Y_0$ is strongly irreducible as witnessed by $K$, therefore for any pair of patterns $u\in\P(M^3, Y_0)$ and $\partial^2 v \in \P(\partial^2M^6, Y_0)$, there is a pattern $w\in \P(M^6,Y_0)$ such that $w(g) = u(g)$ for each $g\in M^3$ and $w(g) = \partial^2v(g)$ for each $g\in \partial^2M^6$. Choose and fix one such pattern $w$ for each choice of $u$ and $\partial^2v$; we shall denote it by $w = u\cup \partial^2 v$.

    Let $S_1^* \in \S_1^*$ be arbitrary and suppose $S_1^* = \int^3(S_1 \setminus M^6C)$ for some shape $S_1 \in \S_1$ and some $L$-separated subset $C \subset G$. Observe that $K^2\cdot KS_1^* \subset S_1\setminus M^6C$, thus $KS_1^*$ is disjoint from $\partial^2(S_1\setminus M^6C)$. Therefore, for each pair of patterns $u\in \P(S_1^*, Y_0)$ and $\partial^2 v\in \P(\partial^2(S_1\setminus M^6C), Y_0)$, there is a pattern $w \in \P(S_1\setminus M^6C, Y_0)$ such that $w(g) = u(g)$ for each $g \in S_1^*$ and $w(g) = \partial^2v(g)$ for each $g\in \partial^2(S_1\setminus M^6C)$. Choose and fix one such pattern $w$ for each choice of $u$ and $\partial^2v$; we shall again denote it by $w = u \cup \partial^2 v$.
    
    We are now ready to construct the map $\psi : X \to Y$ desired for the theorem. To begin, let $x\in X$ be fixed, let $t_0 = \T(x)\in T_0$, let $t_1 = \ret(t_0) \in T_1$, let $t_2 = \ex(t_1) \in T_2$, let $t_1^* \in T_1^*$ be delivered by $t_1$, and let $y_0 = \phi(x)\in Y_0$ be the point delivered by the homomorphism $\phi : X \to Y$ assumed to exist at the beginning of the proof. Let $C = C(t_0) = C(t_1) = C(t_2) = C(t_1^*) \subset G$.
    
    We construct the point $y = \psi(x)\in Y$ in two stages, first by constructing a point $y_1 \in Y_1$ which locally looks like a point of $Y_0$, and then modifying $y_1$ into a point of $Y$ by placing down the marker patterns.
    
    Note that for each $c \in C$, the tuple of shapes $(t_0(c), t_1(c), t_2(c), t_1^*(c))$ satisfies the hypotheses of Lemma~\ref{lem:block-injection} by construction. Let $y_1$ be the point satisfying the following two conditions for every $c\in C$ when $(S_1, S_2, S_1^*) = (t_1(c), t_2(c), t_1^*(c))$. \begin{equation*}\tag{C1}
    \label{eq:C1}
        \sigma^c(y_1)(M^6) = y^{(m)}(M^3) \cup \sigma^c(y_0)(\partial^2M^6)
    \end{equation*} \begin{equation*}\tag{C2}
    \label{eq:C2}
        \sigma^c(y_1)(S_1\setminus M^6Cc^{-1}) = \Psi(S_2, S_1^*)(\sigma^c(x)(S_2)) \cup \sigma^c(y_0)(\partial^2(S_1\setminus M^6C))
    \end{equation*} Everywhere else, let $y_1(g) = y_0(g)$. Here we utilize the block injection(s) $\Psi(S_2,S_1^*)$ delivered by Lemma~\ref{lem:block-injection}, as well as the mixing boundary patterns $w = u\cup \partial^2 v$ constructed earlier. The construction of $y_1$ is illustrated in Figure~\ref{fig:construction}.
    
    \begin{figure}[hbt]
        \centering
        \includegraphics[width=0.8\textwidth]{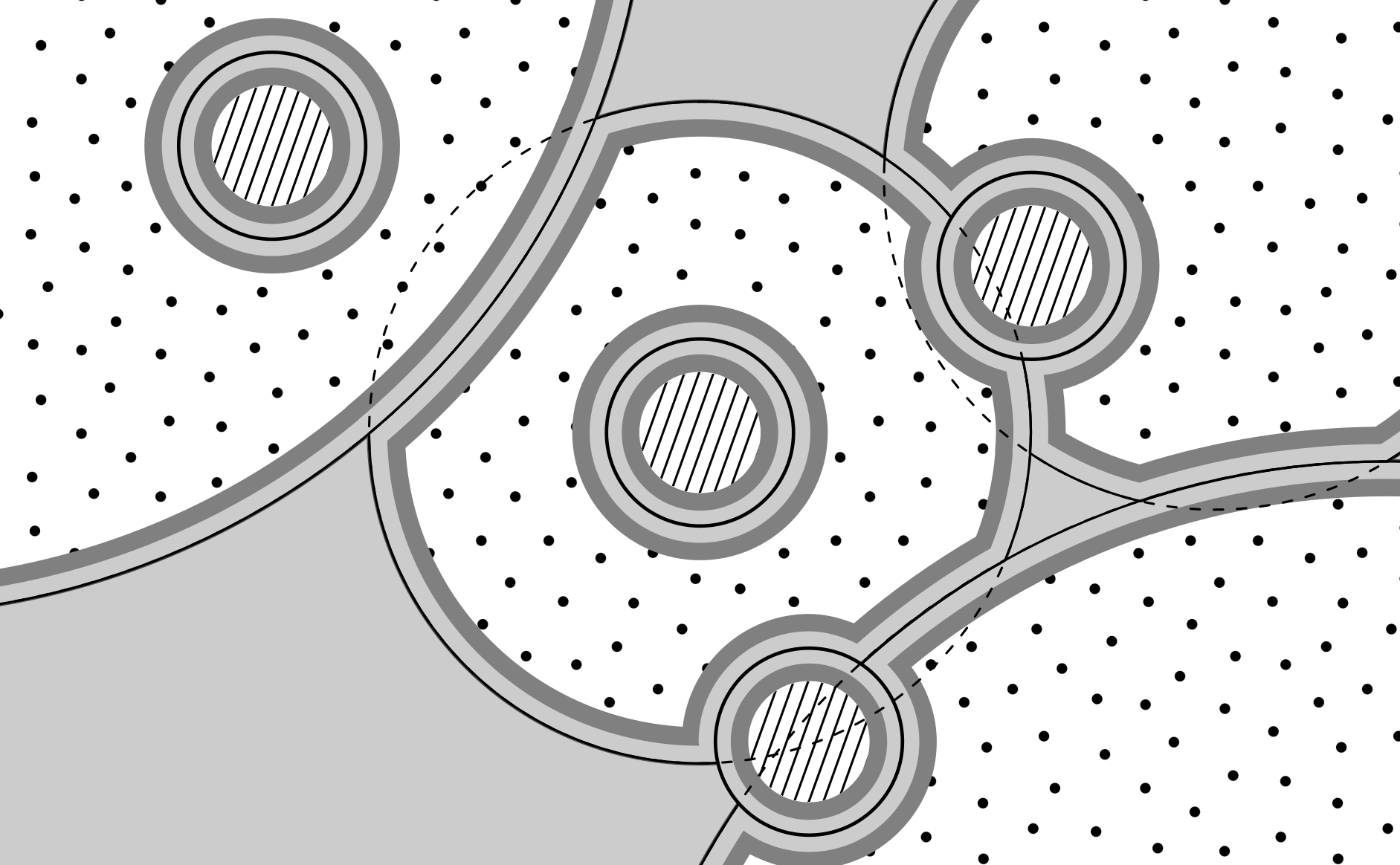}
        \caption{An illustration of the construction of $y_1$ for a hypothetical tiling of $\mathbb{Z}^2$ using different sized circles. The largest solid circles indicate the tiles of $t_0$ (the overlap, indicated by dashed lines, is removed in $t_1$). The smaller solid circles indicate the translates of $M^6$, wherein the marker patterns will later be placed. The stippled tile interiors are the patterns given by the block injections from Lemma~\ref{lem:block-injection}. The darkened boundaries are delivered by the strong irreducibility of $Y_0$, to mix each tile interior pattern with its respective boundary pattern from $y_0$. The small hatched circles are each labeled with the pattern drawn from the marker substrate, $y^{(m)}(M^3)$. The shaded exterior is the base point $y_0$.}
        \label{fig:construction}
    \end{figure}

    First we argue that $y_1$ is well-defined. Let $g\in G$ be fixed; we split over three cases. If $g\in M^6C$, then there is a unique $c \in C$ such that $g\in M^6c$ because $C$ is $L$-separated and $M^6 \subset L$. In this case, $y_1(g)$ is determined by condition (\ref{eq:C1}). If $g\in \Big(\bigcup_c t_1(c)c\Big)\setminus M^6C$, then again there is a unique $c\in C$ such that $g\in t_1(c)c\setminus M^6C = (t_1(c)\setminus M^6Cc^{-1})c$ by the disjointness of $t_1$. In this case, $y_1(g)$ is determined by condition (\ref{eq:C2}). Otherwise, $y_1(g) = y_0(g)$ is again uniquely determined.
    
    Next we argue that $y_1$ belongs to $Y_1$. Recall that $Y_1$ is an SFT as witnessed by $K$. Let $g\in G$ be arbitrary and consider the translate $Kg$. If $Kg$ intersects $\int^2(M^6)c$ for some $c\in C$, then $Kg \subset M^6c$ by Lemma~\ref{lem:intersect-int-implies-containment} and the fact that $K = K^{-1}$. Moreover, the $c$ must be unique. In this case, $\sigma^g(y_1)(K)$ is given by condition (\ref{eq:C1}) and therefore $\sigma^g(y_1)(K) \in \P(K,Y_0)$. If $Kg$ intersects $\int^2(t_1(c)c \setminus M^6C)$ for some $c\in C$, then $Kg \subset t_1(c)c \setminus M^6C$ also by Lemma~\ref{lem:intersect-int-implies-containment}, in which case the $c$ must again be unique. In this case, $\sigma^g(y_1)(K)$ is given by condition (\ref{eq:C2}) and therefore $\sigma^g(y_1)(K) \in \P(K,Y_0)$ again. If neither of these cases hold, then \[
        Kg \subset (G \setminus M^6 C) \cup \partial^2(M^6)C \cup \Big(G \setminus \bigcup_c (t_1(c)c \setminus M^6C)\Big) \cup \bigcup_c \partial^2(t_1(c)c \setminus M^6C).
    \] In this case, $\sigma^g(y_1)(K) = \sigma^g(y_0)(K)$ and therefore $\sigma^g(y_1)(K) \in \P(K,Y_0)$ again. We see that every pattern of shape $K$ appearing in $y_1$ is allowed in $Y_0$. Since $Y_0 \subset Y_1$ and $Y_1$ is an SFT witnessed by $K$, we conclude that $y_1 \in Y_1$.
    
    
    
    
    
    Next we argue that the map $x\mapsto y_1$ is continuous and shift-commuting. For a fixed $g \in G$, in order to determine the symbol $y_1(g)$, one must know first the tiles from $t_0$, $t_1$, $t_2$, and $t_1^*$ to which $g$ belongs. This requires looking only at the symbols of the involved quasi-tilings within a finite neighborhood of $g$. Then, one either applies condition (\ref{eq:C1}) or condition (\ref{eq:C2}) or returns $y_0(g)$. As every involved quasi-tiling and $y_0$ are derived from $x$ in a continuous and shift-commuting manner, it is evident that $y_1(g)$ depends only on $\sigma^g(x)(F_1)$, where $F_1$ is some (possibly very large but) finite subset of $G$. We conclude that the map $x \mapsto y_1$ is continuous and shift-commuting. 
    
    Finally, we construct $y = \psi(x) \in Y$ from $y_1$ as follows. For every $c\in C$, let $y$ satisfy $\sigma^c(y)(M) = m_i$, where $i\in [1,r]$ is the index of the shape $t_0(c) = S_0^{(i)} \in \S_0$ which was fixed at the beginning of the proof, and $m_i$ is the corresponding marker pattern. Everywhere else, let $y(g) = y_1(g)$.
    
    \begin{figure}[hbt]
        \centering
        \includegraphics[width=0.3\textwidth]{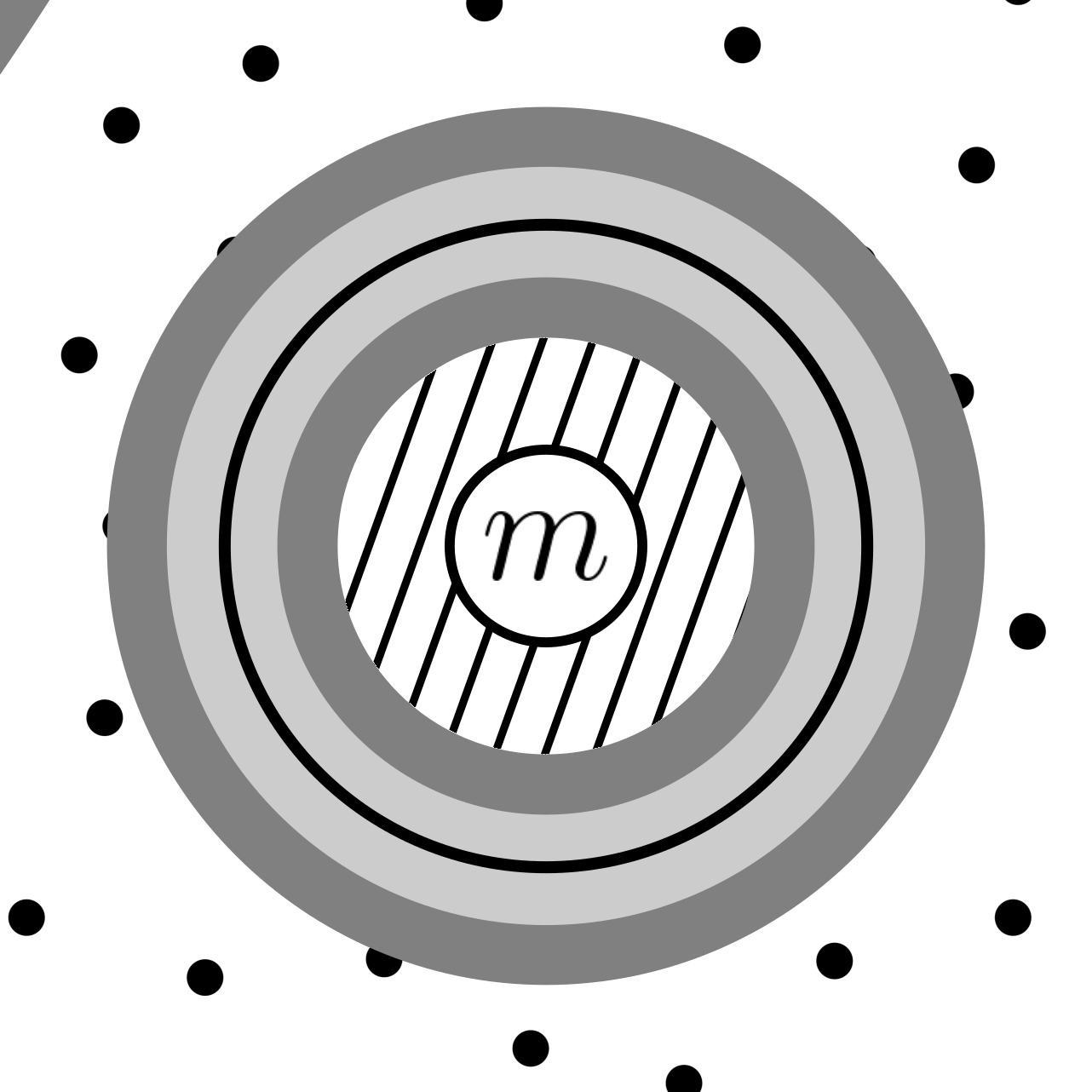}
        \caption{An illustration of the construction of $y$ near a tile center. The marker pattern $m_i$ corresponding to the shape $S_0^{(i)}$ is placed directly over the marker substrate, the pattern $y^{(m)}(M^3)$.}
        \label{fig:construction_marker}
    \end{figure}
    
    The point $y$ is well-defined by identical argument as (and as a consequence of) the fact that $y_1$ is well-defined. Before proceeding, we note a property of $y$ which is critical to later arguments. Let $c\in C$ be arbitrary and suppose that $t_0(c) = S_0^{(i)}$ for some unique index $i \in [1,r]$. Recall $y_i \in Y$ is the point from which the marker pattern $m_i$ is drawn. Then \begin{equation*}\tag{Y1}
        \sigma^c(y)(M^3) = y_i(M^3).
    \end{equation*}
    This is true because $\sigma^c(y)(M) = m_i = y_i(M)$ by construction, together with the fact that \[
        \sigma^c(y)(M^3\setminus M) = \sigma^c(y_1)(M^3\setminus M) = y^{(m)}(M^3\setminus M) = y_i(M^3\setminus M)
    \] where above we have used the construction  of $y$, condition (\ref{eq:C2}), and Theorem~\ref{thm:the-marker-patterns}.

    Next we argue that $y$ belongs to $Y$. Recall that $Y$ is an SFT witnessed by $K$. Let $g\in G$ be arbitrary and consider the translate $Kg$. If $Kg$ intersects $Mc$ for some $c\in C$, then $Kg\subset M^3c$ by the fact that $K = K^{-1}$ and $K\subset M$. Moreover, the $c$ must be unique. Then, if $t_0(c) = S_0^{(i)}$ for some unique index $i\in [1,r]$, property (Y1) implies that $\sigma^g(y)(K) = \sigma^{gc^{-1}}(y_i)(K)$. As $y_i \in Y$, we therefore see that $\sigma^g(y)(K) \in \P(K,Y)$. In the opposite case, $Kg \subset G\setminus MC$, in which case $\sigma^g(y)(K) = \sigma^g(y_1)(K)$ by construction. Because $y_1 \in Y_1 \subset Y$, we again see that $\sigma^g(y)(K) \in \P(K,Y)$. Therefore, every pattern of shape $K$ appearing in $y$ is allowed in $Y$, thus $y\in Y$.
    
    
    
    Next we argue that the map $\psi$ is continuous and shift-commuting. This follows by identical argument as (and as a consequence of) the fact that the map $x \mapsto y_1$ is continuous and shift-commuting. 
    
    Before proving that the map $\psi$ is injective, we note one more property in advance. Let $x\in X$ be fixed, let $t_0 = \T(x)\in T_0$, let $C = C(t_0) \subset G$ and let $y = \psi(x) \in Y$. We claim that for each $g\in G$ and each index $i\in [1,r]$, we have \begin{equation*}\tag{Y2}
        \sigma^g(y)(M) = m_i \text{ if and only if } t_0(g) = S_0^{(i)} \in \S_0.
    \end{equation*} The reverse implication is obvious by construction. For the forward implication, let $g\in G$ be arbitrary and suppose $\sigma^g(y)(M) = m_i$ for some index $i\in [1,r]$. If $Mg$ is disjoint from $MC$, then $\sigma^g(y)(M) = \sigma^g(y_1)(M)$, in which case $\sigma^g(y)(M) = m_i$ contradicts the fact that $m_i$ is forbidden in $Y_1$. Therefore $Mg$ must intersect $MC$, in which case $Mg \subset M^3c$ for some $c\in C$ by the fact that $M = M^{-1}$. Moreover, the $c$ must be unique. Suppose $t_0(c) = S_0^{(j)}$ for some index $j\in [1,r]$. By (Y1) we have that $\sigma^c(y)(M^3) = y_j(M^3)$. Moreover, the fact that $\sigma^g(y)(M) = m_i$ implies that $m_i = \sigma^{gc^{-1}}(y_j)(M)$. By Theorem~\ref{thm:the-marker-patterns}, this only happens in the case where $j = i$ and $g = c$. The claim follows.
    


    Finally, we argue that $\psi$ is injective. Let $x_1$, $x_2\in X$ be arbitrary and suppose that $\psi(x_1) = \psi(x_2)$. Property (Y2) implies that $\T(x_1) = \T(x_2) = t_0 \in T_0$. Then, let $t_1$, $t_1^*$, and $t_2$ be derived from $t_0$ as before and let $C = C(t_0) \subset G$. The condition (\ref{eq:C2}) and the fact that $\Psi$ is an injective map implies that $\sigma^c(x_1)(t_2(c)) = \sigma^c(x_2)(t_2(c))$ for every $c\in C$. Because $t_2$ is an exact tiling of $G$, it follows that $x_1 = x_2$.
\end{proof}

\section{Discussion}

In Theorem~\ref{thm:main-result}, can the assumption that $Y$ contains a factor of $X$ be dropped? That is, under what conditions on $X$ and $Y$ does there necessarily exist a homomorphism $\phi : X \to Y$? This is true if for example $Y$ contains a fixed point, because every subshift factors onto a fixed point. A homomorphism was constructed by Lightwood \cite[Theorem~2.8]{lightwood-II} for $G = \mathbb{Z}^2$ in the case that $X$ is strongly aperiodic and $Y$ is an SFT which satisfies the ``square-filling mixing" condition, therefore providing an extension of Krieger's embedding theorem to $\mathbb{Z}^2$. But, the existence of such homomorphisms in general remains open, even for $G = \mathbb{Z}^d$ where $d\geq 3$.

Can the mixing condition on $Y$ be weakened? 
One might wish to replace the ``uniform" mixing condition of strong irreducibility with a non-uniform mixing condition, such as topological mixing. However, such conditions are too weak for the construction given here, as we rely on the fact that we can cut and paste patterns on tiles in $Y$ that are ``packed together" relatively tightly, with the tiling known to cover a fraction of the group which can be chosen arbitrarily close to $1$. Furthermore, it is known that topological mixing is too weak for a general embedding theorem (i.e., for a general $X$) even for $G = \mathbb{Z}^2$; indeed, Quas and \c{S}ahin \cite[Theorem~1.1]{quas-sahin} have constructed an example of a topologically mixing SFT $Y$ and a nonnegative constant $h_0 < h(Y)$ such that no subshift $X$ satisfying $h_0 \leq h(X) \leq h(Y)$ together with a stronger mixing condition (``square mixing"/``uniform filling" property) can be embedded in $Y$.

Can the assumption that $G$ has the comparison property be dropped? If there are no amenable groups without the comparison property, then this point is moot. We invoke the comparison property in two places in this proof, in each case to construct a desirable system of quasi-tilings. In the first case, as part of the construction of the subshift $Y_0$, we construct a strongly irreducible system of exact tilings of $G$ by way of Theorem~\ref{thm:quasi-to-exact-tiling-factor} (due to Downarowicz and Zhang \cite{downarowicz-zhang-symb-ext, downarowicz-zhang-comparison}) in combination with a construction of Frisch and Tamuz \cite{frisch-tamuz}. In the second case, we adapt the proof of Theorem~\ref{thm:quasi-to-exact-tiling-factor} to construct a system $T_2$ of exact tilings as a factor of a sufficiently nice system of disjoint quasi-tilings $T_1$. As mentioned before, the most significant aspect of $T_2$ is that its tilings completely cover $G$, and disjointness here could in principle be traded for near-disjointness.

If one refuses the comparison property and utilizes instead either $T_0$ or $T_1$ on the side of $X$ to construct the map $\psi : X \to Y$, then $\psi$ has the property that for every $x_1$, $x_2\in X$, if $\psi(x_1) = \psi(x_2)$ then $\T(x_1) = \T(x_2) = t_0\in T_0$ and $x_1(g) = x_2(g)$ for every $g\in \bigcup_c t_0(c)c$. This map is therefore not necessarily injective (unless $t_0$ covers $G$), but one does have that the difference in entropy between $X$ and $\psi(X) \subset Y$ can be made arbitrarily small (based on the covering density of $t_0$). Therefore in this case, if $Y$ contains at least one factor of $X$ then necessarily $Y$ contains many ``nontrivial" factors of $X$, in particular with entropy arbitrarily close to $X$.


One aspect of the quasi-tilings given by Theorem~\ref{thm:dynamical-quasitilings} that we have not exploited is that they are actually \textit{maximal}, in the sense that no one additional tile of any shape could be inserted anywhere without breaking the $\varepsilon$-disjointness property (this is seen if one closely inspects the proof of \cite[Lemma~3.4]{downarowicz-huczek}). This condition is similar to that of the $\rho$-covering condition (Definition~\ref{def:rho-covering}); while the $\rho$-covering is only in general witnessed at some ``scale" $F$ (depending on the quasi-tiling, and possibly much larger than the tiles themselves), the maximality implies that the covering is somehow witnessed on the scale of the tiles. This leverage could be useful for some applications. 


\section*{Acknowledgements}

This work is part of the author's Ph.D. dissertation, conducted under the guidance of Kevin McGoff. The author would also like to thank Mike Boyle for many helpful comments. This work was supported in part by the National Science Foundation grant DMS-1847144.


\printbibliography

\end{document}